\documentclass{article}
\usepackage{fullpage}
\usepackage{authblk}
\usepackage{hyperref} 
\usepackage{graphicx,amsfonts} % Required for inserting images
\usepackage{tikz}
\usepackage{algorithm,nicematrix}
\usepackage[noend]{algpseudocode}

\usepackage{xcolor}
\usepackage{multirow}

\usepackage[capitalize,noabbrev]{cleveref}

\usepackage{amsmath,amssymb,amsthm}
\usepackage{thmtools}
\usepackage{thm-restate}

\usetikzlibrary{patterns,tikzmark}
\usetikzlibrary{matrix,decorations.pathreplacing,calc,fit,backgrounds,calligraphy}

% Redefine comments to be gray and italic
\algrenewcommand\algorithmiccomment[1]{\hfill\textit{\textcolor{gray}{{ $\triangleright$  #1}}}}

\newtheorem{theorem}{Theorem}[section]

\newtheorem{lemma}[theorem]{Lemma}
\newtheorem{remark}[theorem]{Remark}
\newtheorem{observation}[theorem]{Observation}

\newcommand{\field}{\mathbb{F}}
\newcommand{\matsize}[2]{#1\times #2}
\newcommand{\matexp}{\omega}

\newcommand{\dgm}{\mathrm{dgm}}
\newcommand{\mat}[1]{\mathbf{#1}}
\newcommand{\bdr}{\mat{D}}
\newcommand{\PH}{PH}

\newcommand{\low}{\mathbf{low}}
\newcommand{\lft}{\mathbf{lft}}

\renewcommand{\paragraph}[1]{\noindent\textbf{#1.}}

\DeclareMathOperator*{\argmin}{arg\,min}
\DeclareMathOperator*{\argmax}{arg\,max}

\pgfmathsetmacro{\myscale}{2}
\pgfkeys{tikz/mymatrixenv/.style={decoration={brace},every left delimiter/.style={xshift=8pt},every right delimiter/.style={xshift=-8pt}}}
\pgfkeys{tikz/mymatrix/.style={matrix of math nodes,nodes in empty cells,
left delimiter={[},right delimiter={]},inner sep=1pt,outer sep=1.5pt,
column sep=8pt,row sep=8pt,nodes={minimum width=20pt,minimum height=10pt,
anchor=center,inner sep=0pt,outer sep=0pt,scale=\myscale,transform shape}}}

\tikzset{reddish/.style={
    fill=red,draw opacity=0.4,
    draw=red,fill opacity=0.3,
  }}

\title{Persistent (Co)Homology in Matrix Multiplication Time}
\author[1]{Dmitriy Morozov}
\author[2]{Primoz Skraba}

\affil[1]{Lawrence Berkeley National Laboratory}
\affil[2]{School of Mathematical Sciences, Queen Mary University of London}

% \authorrunning{D.\ Morozov, P.\ Skraba}

% \ccsdesc[500]{Computing methodologies~Algebraic algorithms}
% \ccsdesc[500]{Mathematics of computing~Algebraic topology}
% \ccsdesc[500]{Computing methodologies~Linear algebra algorithms}
% \ccsdesc[500]{Theory of computation~Computational geometry}
% \keywords{persistent homology, matrix multiplication, cycle representatives} 
\begin{document}

\maketitle
\begin{abstract}
Most algorithms for computing persistent homology do so by tracking cycles that represent homology classes. There are many choices of such cycles, and
specific choices have found different uses in applications.
Although it is known that persistence diagrams can be computed in matrix multiplication time for the more general case of zigzag persistent homology~\cite{milosavljevic2011zigzag}, it is not clear how to extract cycle representatives, especially if specific representatives are desired.
In this paper, we provide the same matrix multiplication bound for computing representatives for the two choices common in applications in the case of ordinary persistent (co)homology.
We first provide a fast version of the reduction algorithm,  which is simpler than the algorithm in \cite{milosavljevic2011zigzag}, but returns a different set of representatives than the standard algorithm~\cite{edelsbrunner2002topological}.
We then give a fast version of a  variant called the row algorithm \cite{de2011dualities}, which returns the same representatives as the standard algorithm. 
\end{abstract}

% \section*{To-do}
% \begin{enumerate}
% \item
%     Is matrix inversion only necessary for the column algorithm or is it implicitly buried somewhere in the row algorithm as well?
% \item
%     Figure out the details for matrix inversion over finite fields. How does this relate to \cite{CK21}? Alternatively: can we just use back-substitution to solve $X \cdot X^{-1} = I$?
% \item
%     Move ``matrix preliminaries'' to its own section. Split and organize around ``Column Algorithm'' and ``Row Algorithm''. Point out no need for matrix inversion in row algorithm + different single-pass variations, including repair.
% \end{enumerate}

\section{Introduction}

Persistent homology~\cite{edelsbrunner2002topological} is one of the core techniques in topological data analysis. Because of its theoretical importance as well as its wide use in applications, a lot of work has been dedicated to its efficient computation both in theory \cite{milosavljevic2011zigzag,de2011dualities,chen2011output,busaryev2012annotating,kerber2016persistent} and in practice~\cite{bauer2014clear,bauer2017phat,bauer2021ripser,hylton2017performance,henselmanghristl6,wagner2011efficient}.
On the theoretical side, the connection between persistent homology and Gaussian elimination was established early on~\cite{zomorodian2004computing}, and as a result it has been long accepted that persistence can be computed in matrix multiplication time. Milosavljevic et al.~\cite{milosavljevic2011zigzag} gave an explicit algorithm for the more general case of zigzag persistence~\cite{CdS10,CdSM09}. The main difficulty in applying standard linear algebra techniques is the strict constraint on the row and column ordering of the boundary matrix. 

As persistent homology evolved as a field, the significance of its formulation as an $\mat{R} = \mat{D} \mat{V}$ decomposition~\cite{cohen2006vines} of the input boundary matrix $\mat{D}$ has become clear. The cycles and chains recovered from this decomposition found many uses in attributing topological features to the input data~\cite{dSMVJ11,CSLV22,NiMo24}. However, it is not immediately clear from the algorithm in \cite{milosavljevic2011zigzag} how to recover the matrices $\mat{R}$ and $\mat{V}$ necessary in applications.

Instead of trying to deconstruct the algorithm of Milosavljevic et al.~\cite{milosavljevic2011zigzag}, we simplify it for the case of ordinary persistence.
We describe how to compute two different forms of the $\mat{R} = \mat{D}\mat{V}$ decomposition that come up in applications --- lazy and exhaustive reductions --- which lets us recover specific cycle representatives in matrix multiplication time.
As a benefit, because of its simpler setting, the algorithms in this paper are greatly simplified, compared to \cite{milosavljevic2011zigzag}.
%
%While there has also been substantial work on zigzag persistence~\cite{maria2016computing,dey2022fast},  in this paper we focus on the standard case.
Additionally, using Dey and Hou's fast zigzag construction~\cite{dey2022fast}, which reduces computation of zigzag persistence to ordinary persistence; the algorithms in this paper give another way to compute zigzag persistence in matrix multiplication time.

\section{Preliminaries}
We are concerned with the persistent homology of a filtration,  an increasing sequence of simplicial complexes indexed by the natural numbers $\mathbb{N}$:
\[ \emptyset  =  K_0 \subset K_1 \subset \ldots \subset K_N =K\]
While persistent homology can be defined more generally, from an algorithmic
perspective this is sufficiently general as nearly all considered cases may be
reduced to this setting. We further assume that $| K_i -  K_{i-1}| = 1$, or
equivalently that the filtration is a total order, i.e., each step in the
filtration adds a single cell. If the initial filtration does not satisfy this
requirement, and defines only a partial order, we may always extend it to an
arbitrary compatible total order, e.g., breaking ties by lexicographical ordering and dimension.

\vspace{0.2cm}
\noindent\textbf{Notation}
Throughout, we use the following notation:
\begin{itemize}
\item Matrices are denoted by bold capital letters, e.g., $\mat{M}$, with sub-matrices indexed by square brackets, e.g., the $j$-th column is $\mat{M}[\cdot,j]$, the $i$-th row is $\mat{M}[i, \cdot]$. 
\item Indexing starts at 1 and indices may be indexed by sets, e.g., $A = \{1,\ldots,k\}$, $\mat{M}[\cdot,A]$ refers to the first $k$ columns. Note that the set may not refer to contiguous columns/rows.
\item We consider permutations implicitly, so $(\mat{P}\cdot\mat{M})[A,\cdot]$ refers to the rows indexed \textbf{after} the permutation $\mat{P}$ is applied to matrix $\mat{M}$. For all the algorithms, performing an explicit permutation, then undoing it, does not affect the asymptotic running time. %i.e., in the case above multiplying from the left by $\mat{P}^{-1}=\mat{P}^{T}$, . 
%\item For standard matrices, e.g., the identity matrix, a subscript denotes their
%     size. If there is only one subscript, it indicates a square matrix:
%    $\mat{I}_{n}$ is an $\matsize{n}{n}$ matrix.
\end{itemize}

\subsection{Persistence Algorithm}

For completeness, we recount the standard persistence algorithm, see \cite{edelsbrunner2002topological}, \cite{zomorodian2004computing}, and \cite{edelsbrunner2022computational} for a more complete description. Let $\bdr$ denote the boundary operator, which we represent as an $\matsize{n}{n}$-matrix over some fixed (finite) field $\field$. 
The rows (top-down) and columns (left-right) are ordered by appearance in the filtration. Hence, the $i$-th row and column represents the simplex $\sigma = K_i - K_{i-1}$  and $\bdr[\cdot,i]$ represents the boundary of $\sigma$ respectively. 
The goal is to compute the $\mat{RV}$-decomposition \cite{cohen2006vines}. That
is, find matrices $\mat{R}$ and $\mat{V}$ such that 
\[\mat{R} = \bdr\cdot \mat{V},\]
where $\mat{R}$ is a column-reduced matrix and $\mat{V}$ is full-rank
upper-triangular; we let $\mat{U} = \mat{V}^{-1}$.

We introduce the helper function 
\[\low(j) = 
\argmax\limits_{i} \mat{R}[i,j]\neq 0,  \]
which is defined whenever $\mat{R}[\cdot,j] \neq \mat{0}$.  
When defined, it returns the largest row index $i$ where the column is non-zero; we refer to these lowest
non-zero elements as \textbf{pivots}. Throughout this paper, $\low$ always applies to $\mat{R}$.  The persistence diagram is then given by:
\[\dgm =  \left\{(i,j) \mid
    i = \low(j) \right\}
    \; \cup \;
    \left\{
        (i, \infty) \mid
            \mat{R}[\cdot,i]= \mat{0} \;\mbox{and}\;
            i \neq \low(j) \;\forall j
    \right\}.
    \]
%where the death time is set to $\infty$, if there exists no $j$ such that $i=low(j)$.

\begin{algorithm}[h!]
\caption{\label{alg:standard}Standard Persistence Algorithm (Lazy reduction)}
\begin{algorithmic}[1]
	\State $\mat{R} = \bdr$
 	\State $\mat{V} = \mat{I}_{n}, \mat{U} = \mat{I}_{n}$
	\For{$j = 1$  to $n$}\tikz[remember picture] \node (forstart) {};
	\While{$\mat{R}[\cdot,j]\neq \mat{0}$ and $\exists j' <j$ such that $\low(j) = \low(j')$ }\tikz[remember picture] \node (whilestart) {};
	\State $i \gets \low(j)$
    \State $\alpha \gets {\mat{R}[i,j]} / {\mat{R}[i,j']}$
	\State $\mat{R}[\cdot,j] \gets \mat{R}[\cdot,j] - \alpha \cdot \mat{R}[\cdot,j']$
    \State $\mat{V}[\cdot,j] \gets \mat{V}[\cdot,j] - \alpha \cdot \mat{V}[\cdot,j']$
    \State $\mat{U}[j', \cdot] \gets \mat{U}[j',\cdot] + \alpha \cdot \mat{U}[j, \cdot]$
                \qquad (equivalently, $\mat{U}[j',j] \gets \alpha$)\tikz[remember picture] \node (ends) {};
	\EndWhile
	\EndFor
            \vspace{-0.4cm}
\end{algorithmic}
\begin{tikzpicture}[remember picture, overlay]
    % Coordinates are relative to the algorithm
    
    \path let \p1=(forstart.south) in coordinate (a) at (0.7,\y1-0.1cm) ; 
    \path let \p2=(ends.south) in coordinate (b) at (0.7,\y2) ; 
    \path let \p3=(ends.south) in coordinate (c) at (0.9,\y3) ; 
    \draw[gray] (a) -- (b);
    \draw[gray] (b) -- (c);
    
    \path let \p1=(whilestart.south) in coordinate (a1) at (1.25,\y1-0.1cm) ; 
    \path let \p2=(ends.south) in coordinate (b1) at (1.25,\y2) ; 
    
    \path let \p3=(ends.south) in coordinate (c1) at (1.45,\y3) ;
    \draw[gray] (a1) -- (b1);
    \draw[gray] (b1) -- (c1);

\end{tikzpicture}
\end{algorithm}

The standard algorithm is given by a variant of Gaussian elimination.
\cref{alg:standard}, which we call  the \textbf{lazy reduction}, reduces each column by considering pivots in
the columns to the left. In each step of the loop, it removes the lowest
non-zero entry until a new pivot is found or the entire column is zeroed out.
A straightforward analysis gives a running time
bound of $O(n^3)$. In \cite{morozov2005persistence}, it was shown that this
bound is tight by giving an example filtration where this algorithm takes cubic
time.
The updates of matrix $\mat{V}$ follow those of matrix $\mat{R}$. The updates in
matrix $\mat{U}$ undo the updates in $\mat{V}$ to maintain
$\mat{U} \cdot \mat{V} = \mat{I}$. Because the columns of $\mat{R}$ are
processed from left to right, during the update of matrix $\mat{U}$, the row
$\mat{U}[j,\cdot]$ has a single (diagonal) element. Therefore, the update in
$\mat{U}$ is equivalent to just setting the entry $\mat{U}[j',j]$ to $\alpha$.

\begin{algorithm}
\caption{\label{alg:lookahead}``Look-ahead'' Variant of Standard Persistence (Exhaustive reduction)}
\begin{algorithmic}[1]
    \State $\mat{R} = \bdr$
 	\State $\mat{V} = \mat{I}_{n}, \mat{U} = \mat{I}_{n}$
	\For{$j = 1$  to $n$}\tikz[remember picture] \node (forsta) {};
	   \If{$\low (j)$ is defined}\tikz[remember picture] \node (ifsta) {};
	    \State $i \gets \low(j)$
            \For{$j'>j$ and $\mat{R}[\low(j),j']\neq 0$}\tikz[remember picture] \node (forstb) {};
                \State $\alpha \gets {\mat{R}[i,j']} / {\mat{R}[i,j]}$
                \State $\mat{R}[\cdot,j']\gets \mat{R}[\cdot,j'] - \alpha \cdot \mat{R}[\cdot,j]$
                \State $\mat{V}[\cdot,j'] \gets \mat{V}[\cdot,j']- \alpha \cdot \mat{V}[\cdot,j]$
                \State $\mat{U}[j, \cdot] \gets \mat{U}[j,\cdot] + \alpha \cdot \mat{U}[j', \cdot]$
                            \qquad (\textbf{not} the same as $\mat{U}[j,j'] \gets \alpha$!) \tikz[remember picture] \node (endsa) {};
            \EndFor
	   \EndIf
        \EndFor
                    \vspace{-0.4cm}
\end{algorithmic}
   \begin{tikzpicture}[remember picture, overlay]
    % Coordinates are relative to the algorithm
    
    \path let \p1=(forsta.south) in coordinate (a) at (0.7,\y1-0.1cm) ; 
    \path let \p2=(endsa.south) in coordinate (b) at (0.7,\y2) ; 
    \path let \p3=(endsa.south) in coordinate (c) at (0.9,\y3) ; 
    \draw[gray] (a) -- (b);
    \draw[gray] (b) -- (c);
    
    \path let \p1=(ifsta.south) in coordinate (a1) at (1.2,\y1-0.1cm) ; 
    \path let \p2=(endsa.south) in coordinate (b1) at (1.2,\y2) ;     
    \path let \p3=(endsa.south) in coordinate (c1) at (1.4,\y3) ;
    \draw[gray] (a1) -- (b1);
    \draw[gray] (b1) -- (c1);

    \path let \p1=(forstb.south) in coordinate (a2) at (1.75,\y1-0.1cm) ; 
    \path let \p2=(endsa.south) in coordinate (b2) at (1.75,\y2) ;     
    \path let \p3=(endsa.south) in coordinate (c2) at (1.95,\y3) ;
    \draw[gray] (a2) -- (b2);
    \draw[gray] (b2) -- (c2);

\end{tikzpicture}
\end{algorithm}

We introduce another variant of persistence computation used in applications.  This one ``looks ahead'' and eliminates as many elements from the matrix as it can; so we call it the \textbf{exhaustive reduction}.
In \cref{alg:lookahead}, by the time a column is considered, it is already reduced.
It is then applied to all columns to the right, zeroing out the entire row.
In this way,
when a column is processed, all previous pivots have already been applied.
%DM: I'm not sure how true/precise this claim is, so I'm commenting it out.
%All column operations in  Algorithm~\ref{alg:standard} are also applied in
%Algorithm~\ref{alg:lookahead} but not vice-versa as
%Algorithm~\ref{alg:lookahead} may perform additional operations.
%
Because column $\mat{R}[\cdot,j']$ may have already been used to reduce another column
$\mat{R}[\cdot,j'']$, when an update is applied to it, the row $\mat{U}[j',\cdot]$
need not consist of a single diagonal element. Hence, the row update in matrix $\mat{U}$ is not
equivalent to just setting $\mat{U}[j,j']$ to $\alpha$, like in the lazy version
of the algorithm.

\vspace{0.1cm}
\paragraph{\textbf{Lazy vs. Exhaustive}}
The algorithms based on the two reductions are in a sense the extreme cases: lazy reduction reduces columns only when necessary; and the exhaustive reduction reduces them as much as possible. To illustrate the difference, in the exhaustive reduction, the entire row to the right of a pivot will be zeroed out, whereas in the case of the lazy reduction, only those columns will be zeroed out which have a conflicting pivot in the same row. There are, of course, any number of possible other reductions between these --- but we do not know of any applications that rely on them.

%While the lazy reduction (or variant) is usually used in practice, it turns out that the exhaustive reduction is the natural method for matrix multiplication approach to computing persistence. It is this variant which gives a $O(n^\omega)$ running time (and is the simpler version of the algorithm in \cite{milosavljevic2011zigzag}). 

\begin{remark}
As noted in \cite[Section 3.4]{de2011dualities}, persistent cohomology can be computed by performing the same algorithm on the anti-transpose of $\mat{D}$. As such all the algorithms in this paper can compute persistent cohomology.% including the cocyle representatives.
 \end{remark}

\subsection{Cycle Representatives}

Given an $\mat{R} = \mat{D} \mat{V}$ decomposition,
we distinguish between two types of simplices in the filtration: \emph{positive} simplices create new homology classes; \emph{negative} simplices destroy them.
From the matrix decomposition, one can recover a set of cycle representatives of persistent homology.
There are two sources of this information, although with different content.

\begin{enumerate}
    \item $\mat{V}$ matrix: The columns of positive simplices in $\mat{V}$ correspond to zero columns in the reduced matrix $\mat{R}$. These columns in $\mat{V}$ are cycles, by definition.
\item $\mat{R}$ matrix: The columns of negative simplices in $\mat{R}$ are non-zero. They store linear combinations of boundaries, which are cycles (at the time of their birth) that eventually die in the filtration.
\end{enumerate}

The cycles one recovers from matrix $\mat{R}$ differ between the lazy and exhaustive reductions. What we call exhaustive reduction, Cohen-Steiner et al.~\cite{CSLV22} called total reduction. They show that these cycles form a lexicographically optimal basis, which in turn can be used to triangulate point cloud data. Nigmetov and Morozov~\cite{NiMo24} use the lazy reduction --- specifically, the columns and rows from matrices $\mat{V}$ and $\mat{U}$ --- to recover what they called ``critical sets,'' subsets of simplicial complexes affected by optimization with the loss formulated in terms of a persistence diagram.

\vspace{0.1cm}
\paragraph{\textbf{Minimum Spanning Acycle Basis}}
The negative simplices in the filtration represent a minimal spanning acycle (MSA) ~\cite{kalai1983enumeration,skraba2020randomly}. The columns in $\mat{V}$ obtained from either lazy or exhaustive reduction --- or any  reduction that never adds columns of positive simplices to  other columns --- have the form $\sum_{\tau\in MSA} \lambda_\tau \tau + \lambda_\sigma \sigma$. The support of the chain is one (positive) simplex $\sigma$ plus a linear combination of simplices in the minimum spanning acycle.

\begin{lemma}
The representatives from $\mat{V}$ are the same for lazy and exhaustive reduction.
\end{lemma}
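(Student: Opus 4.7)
The plan is to show that for every positive simplex $\sigma_j$, the column $\mat{V}[\cdot, j]$ is uniquely characterized by properties that both algorithms preserve, forcing the two reductions to produce the same representative.

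The first step is a structural observation common to both algorithms: a column $\mat{V}[\cdot, j']$ is ever used as the source of an update only when $\mat{R}[\cdot, j']$ has a defined pivot, i.e., only when $\sigma_{j'}$ is negative. Since $\mat{V}$ is initialized to the identity and only columns indexed by negative simplices are added to other columns, a short induction on the iterations shows that for every $j$,
\[
\mat{V}[\cdot, j] \;=\; e_j \;+\; \sum_{\substack{i < j \\ i \in \mathcal{N}}} \alpha_i^{(j)}\, e_i,
\]
where $e_i$ is the $i$-th standard basis vector and $\mathcal{N}$ denotes the set of indices of negative simplices. The positive/negative pairing is a filtration invariant, so $\mathcal{N}$ is identical in the two reductions.

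Next, for positive $j$ the column $\mat{V}[\cdot, j]$ is a cycle, since $\mat{D}\mat{V}[\cdot, j] = \mat{R}[\cdot, j] = \mat{0}$. It therefore suffices to prove that at most one cycle has the form $e_j + \sum_{i < j,\, i \in \mathcal{N}} \alpha_i e_i$, and I would obtain this from the linear independence of $\{\partial \sigma_i\}_{i \in \mathcal{N}}$. To derive that independence, restrict the identity $\mat{R} = \mat{D}\mat{V}$ to the columns indexed by $\mathcal{N}$: by the support statement above, $\mat{V}[\cdot, i]$ is supported inside $\mathcal{N}$ for every $i \in \mathcal{N}$, so
\[
\mat{R}[\cdot, \mathcal{N}] \;=\; \mat{D}[\cdot, \mathcal{N}] \cdot \mat{V}[\mathcal{N}, \mathcal{N}].
\]
The factor $\mat{V}[\mathcal{N}, \mathcal{N}]$ is upper triangular with unit diagonal, hence invertible, and $\mat{R}[\cdot, \mathcal{N}]$ has linearly independent columns (its pivots are pairwise distinct), so the columns of $\mat{D}[\cdot, \mathcal{N}]$ are linearly independent as well.

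With this in hand, any two cycles $c^1, c^2$ of the prescribed form satisfy $c^1 - c^2 = \sum_{i < j,\, i \in \mathcal{N}} (\alpha_i^1 - \alpha_i^2)\, e_i$, and $\partial(c^1 - c^2) = 0$ forces all coefficients to agree. Thus the positive-column $\mat{V}[\cdot, j]$ is uniquely determined, so both reductions produce the same representative. I expect the structural induction pinning down the support of the $\mat{V}$-columns to be the only step requiring care; the linear-algebra uniqueness and the invariance of $\mathcal{N}$ across reductions are standard.
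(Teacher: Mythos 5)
Your proof is correct and follows essentially the same route as the paper: both hinge on (i) the invariance of the negative-simplex set $\mathcal{N}$ across reductions, (ii) the support structure of the $\mat{V}$-columns (which the paper states in the ``Minimum Spanning Acycle Basis'' paragraph immediately preceding the lemma and then uses implicitly), and (iii) the linear independence of the boundaries of negative simplices, which forces uniqueness of the linear combination. You are somewhat more careful than the paper on (iii) --- in particular, factoring $\mat{R}[\cdot,\mathcal{N}]=\mat{D}[\cdot,\mathcal{N}]\cdot\mat{V}[\mathcal{N},\mathcal{N}]$ with $\mat{V}[\mathcal{N},\mathcal{N}]$ unitriangular cleanly transfers linear independence from $\mat{R}$ to $\mat{D}$, whereas the paper's parenthetical ``since their pivots are in distinct rows'' strictly speaking applies to the columns of $\mat{R}$ rather than to the raw boundaries of the negative simplices.
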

\begin{proof}
%By the Pairing Uniqueness Lemma \cite{cohen2006vines},     % DM: that's an overkill for this statement
    Which columns of $\mat{R}$ are non-zero does not depend on the algorithm used; the set of negative simplices only depends on the order of the filtration. We observe that the boundaries of the negative simplices form a basis (since their pivots are in distinct rows). By definition, the boundary of a positive simplex may be expressed as a linear combination of boundaries of negative simplices.
%\[\partial \sigma^{+}  = \sum\limits_{
%\substack{\tau<\sigma^{+}\\\ \tau \in N }} \lambda_\tau \partial\tau,\]
%for some choice of coefficients $\lambda_\tau$. 
Since the boundaries of the negative simplices form a basis, this linear combination is unique. 
\end{proof}

\paragraph{\textbf{Death Basis}} An alternative is to get cycle representatives directly from the $\mat{R}$ matrix. Its non-zero columns represent cycles because multiplying $\mat{D}\cdot \mat{R} = 0$. This follows directly since $\mat{R}= \mat{D}\cdot\mat{V} $, so  
$\mat{D}\cdot \mat{R} =  \mat{D}\cdot \mat{D}\cdot\mat{V} = 0,$
since $\mat{D}\cdot \mat{D} =0.$
These columns give cycle representatives for all finite classes, i.e., those homology classes which eventually die.
%These representatives have the benefit that they become bounded when they die. Alternatively, the representatives described above, may die by becoming homologous to an earlier class rather than become bounded. 

\begin{remark} Following \cite{de2011dualities}, we can use the same approach for cohomology --- applying the same algorithms to $\mat{D}^{\perp
}$, the anti-transpose of $\mat{D}$ to obtain the decomposition $\mat{R}^{\perp
} = \mat{D}^{\perp}\cdot \mat{V}^{\perp
} $. The cocycle representatives are then given by columns in  $\mat{V}^{\perp}$. To see that these are cocycles, we observe that by construction, they map to zero via the coboundary operator $\mat{D}^{\perp
    }$ before the corresponding death time, i.e., below the pivot in the corresponding column in $\mat{R}^{\perp}$, as we have reversed the indexing with the anti-transpose. Likewise, the cocycle representatives of essential cocycles are given by the columns of $\mat{V}^{\perp}$ whose: corresponding columns in $\mat{R}^{\perp}$ are zero \emph{and} they must not be coboundaries, i.e., the corresponding rows in $\mat{R}^{\perp}$ must not contain pivots.
    %\Remark{I don't understand the last sentence. Why ``rows'' and what does not being a coboundary have to do with this?}
%.  The same applies to cocycle representatives. For finite intervals, i.e. cocycles which eventually become coboundaries, the cocycles are given by the columns in $\mat{R}^{\perp}$, the  matrix obtained reducing epresentatives of the essential cocycles 
\end{remark}
\section{Matrix Preliminaries}
%\subsection{Matrix Preliminaries}
We recount a few classical results on matrix multiplication.  We assume the
matrices are over some field $\field$.

\begin{lemma}\label{lem:block_multiplication}
	Let $\mat{B}$ be an $\matsize{n}{k}$ matrix and $\mat{C}$ be a $\matsize{k}{k}$  matrix. The product $\mat{B}\cdot \mat{C}$ can be computed in $O(nk^{\matexp-1})$-time.
\end{lemma}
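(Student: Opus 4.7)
The plan is to reduce the rectangular $n \times k$ by $k \times k$ product to a sequence of square $k \times k$ multiplications, each of which can be computed in $O(k^{\matexp})$ time by definition of the matrix multiplication exponent.

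First I would split $\mat{A}$ horizontally into $\lceil n/k \rceil$ row-blocks $\mat{A}_1, \mat{A}_2, \ldots, \mat{A}_{\lceil n/k \rceil}$, each of size at most $\matsize{k}{k}$ (padding the last block with zero rows if $k \nmid n$, which does not affect the asymptotics). Then
\[
\mat{A} \cdot \mat{B} \;=\; \begin{bmatrix} \mat{A}_1 \mat{B} \\ \mat{A}_2 \mat{B} \\ \vdots \\ \mat{A}_{\lceil n/k \rceil} \mat{B} \end{bmatrix},
\]
so the product reduces to $\lceil n/k \rceil$ independent $\matsize{k}{k}$ times $\matsize{k}{k}$ products.

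Each such square product costs $O(k^{\matexp})$ by the definition of $\matexp$, so the total cost is
\[
O\!\left(\left\lceil \tfrac{n}{k} \right\rceil \cdot k^{\matexp}\right) \;=\; O\!\left(n \cdot k^{\matexp-1}\right),
\]
where we used that $n \geq k$ in the regime of interest (and if $n < k$, a single block of size $\matsize{k}{k}$ already gives the stated bound, since $nk^{\matexp-1} = \Theta(k^\matexp)$ up to padding).

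There is no real obstacle here; the only thing to be careful about is the edge case $n < k$ and the rounding in $\lceil n/k \rceil$, both of which are absorbed into the $O(\cdot)$ by the padding argument. The rest is just invoking the definition of $\matexp$.
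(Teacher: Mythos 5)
Your proof is correct and follows essentially the same approach as the paper: split $\mat{A}$ into $\lceil n/k \rceil$ row-blocks of size $\matsize{k}{k}$, multiply each by $\mat{B}$ in $O(k^{\matexp})$ time, and sum the costs. The extra care you take with padding and the $n < k$ edge case is a minor refinement the paper leaves implicit.
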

\begin{proof}
	Divide $\mat{B}$ into $\lceil \frac{n}{k}\rceil$ sub-matrices $\mat{B}_i$ of size $\matsize{k}{k}$:
 \[ \mat{B}\cdot \mat{C} = \begin{bmatrix} 
\mat{B}_1 \\ \vdots \\ \mat{B}_{\lceil n/k \rceil} 
 \end{bmatrix} \cdot \mat{C} = \begin{bmatrix} 
 \mat{B}_1\cdot \mat{C} \\ \mat{B}_2\cdot \mat{C} \\ \vdots \\ \mat{B}_{\lceil n/k \rceil} \cdot \mat{C}
 \end{bmatrix}  \]
	
	Each $\mat{B}_i \cdot \mat{C}$ is a product of two $\matsize{k}{k}$ matrices, which by definition takes $O(k^{\matexp})$ time.  There are $\lceil\frac{n}{k}\rceil$ products to compute yielding a running time of $O\left(\frac{n+1}{k} k^\matexp\right ) = O(n k^{\matexp-1})$.
\end{proof}

\paragraph{\textbf{Column Operations via Matrix Multiplication}}
    Here we relate matrix multiplication with the reduction steps used in the persistence algorithms, i.e.,  representing column operations via matrix multiplication.  Assume that we have the following block matrix,
\[
\left[
    \begin{array}{c|c}\mat{B} &\mat{C}\end{array} \right] = \left[\begin{array}{ccc|ccc}b(1) &\ldots& b(n) &c(1) &\ldots & c(m) \end{array}\right]
\]
%\[\begin{bmatrix}\mat{A} &\mat{B}\end{bmatrix} = 
%\begin{bmatrix}{}a(1) &\ldots& a(n) &|& b(1) &\ldots b(m)  \end{bmatrix}\]
where $b(i)$ and $c(j)$ are (column) vectors. Reducing $c(j)$ which we denote $c'(j)$, using the vectors $b(i)$, is expressed as a linear combination
\[c'(j) = c(j) + \sum_i \lambda_i(j) b(i),\]
where $\lambda_i(j)$ are the coefficients. 
Reducing only the $j$-th column of $\mat{C}$ may be written as
\[\begin{bmatrix}\mat{B} & c'(j) \end{bmatrix} = 
\begin{bmatrix}\mat{B} &  c(j) \end{bmatrix}  \cdot  \begin{bmatrix}\mat{I} & \lambda(j) \\ 0 & \mat{1}_j  \end{bmatrix},
\]
where $\mat{1}_j$ is a column vector where the $j$-th entry is 1 and zero  everywhere else. If we consider
$\mat{\Lambda}[i,j] := 
\lambda_i(j),$
we can rewrite the reduction of $\mat{C}$ by
\[\begin{bmatrix}\mat{B} & \mat{C}' \end{bmatrix} = 
\begin{bmatrix}\mat{B} &  \mat{C} \end{bmatrix}  \cdot  \begin{bmatrix}\mat{I} & \mat{\Lambda}\\ 0 & \mat{I}  \end{bmatrix},
\]
or equivalently, the reduced matrix is %$\mat{B}'$ is
$\mat{C}' =  \mat{C} - \mat{B} \cdot \mat{\Lambda}.$
%%%%%%%% WE NEVER USE THE BELOW I THINK
% \begin{observation}  
% If we would like to omit a certain column operation in the reduction, i.e. not
%     apply column $a(i)$ to $b(j)$, we need only set $\mat{\Lambda}[i,j]=0$.
% \end{observation}
%%%%%%%%%%%%%%%%%%%%%%%%
\begin{lemma}\label{lem:perm}
    Multiplying a permutation matrix with an $\matsize{n}{m}$ matrix, takes $O(nm)$ time.
\end{lemma}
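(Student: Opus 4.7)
The plan is to exploit the sparsity structure of a permutation matrix: since each row and each column contains exactly one nonzero entry (a $1$), multiplication by such a matrix is not really multiplication at all but a relabeling of rows (or columns). Concretely, if $\mat{P}$ is an $n\times n$ permutation matrix associated with a permutation $\pi$ of $\{1,\dots,n\}$, and $\mat{M}$ is an $n\times m$ matrix, then $(\mat{P}\cdot \mat{M})[i,\cdot] = \mat{M}[\pi^{-1}(i),\cdot]$. The product $\mat{M}\cdot\mat{P}'$ for an $m\times m$ permutation matrix $\mat{P}'$ is analogous, permuting columns instead of rows.

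First I would make this structural observation explicit by writing down what the $(i,j)$-entry of $\mat{P}\cdot\mat{M}$ is: the sum $\sum_k \mat{P}[i,k]\cdot \mat{M}[k,j]$ collapses to a single nonzero term because $\mat{P}[i,\cdot]$ has precisely one nonzero entry. Thus computing the product amounts to reading each entry of $\mat{M}$ exactly once and writing it to its new position, determined by the permutation.

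To bound the running time, I would note that we can first read off the permutation $\pi$ from $\mat{P}$ in $O(n)$ time (by scanning for the $1$ in each row, or, if $\pi$ is already available in a standard representation, in zero extra time). Copying $\mat{M}$ row by row into its permuted position then costs $O(m)$ per row and $O(nm)$ in total, which dominates. The total cost is therefore $O(n + nm) = O(nm)$, as claimed. The only ``obstacle'' — really just a bookkeeping point — is to be explicit that the lemma assumes access to the permutation in a standard form (either as the matrix $\mat{P}$ itself, which can be preprocessed in $O(n)$ to extract $\pi$, or as the permutation directly), so that no $O(n^2)$ scan of the zero entries of $\mat{P}$ is required.
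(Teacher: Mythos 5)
Your proof is correct and follows the same idea as the paper's (one-sentence) argument: multiplication by a permutation matrix is just a relabeling, so each of the $nm$ entries is read and written exactly once. Your added bookkeeping point about extracting $\pi$ from $\mat{P}$ in $O(n)$ time is a reasonable clarification but does not change the approach.
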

\begin{proof}
As a permutation matrix rewrites each element once, the total number of elements gives the bound. 
\end{proof}

\section{Column Algorithms}
Here we describe the algorithm in \cite{milosavljevic2011zigzag} for the special case of classical persistent homology. To aid in exposition, we first recall the \textbf{Schur complement}. Given a block matrix
\[
\begin{bmatrix}
    \mat{B}_1 & \mat{C}_1 \\
    \mat{B}_2 & \mat{C}_2
\end{bmatrix},\]
assuming $\mat{B}_2$ is non-singular, we can compute an updated matrix where the rows corresponding to $\mat{B}_2$ are zeroed out by the following transformation:

\begin{equation}\label{eq:schur}
	\begin{bmatrix}
        \mat{B}_1 & \mat{C}_1 \\
        \mat{B}_2 & \mat{C}_2
    \end{bmatrix}\begin{bmatrix} \mat{I}& -\mat{B}_2^{-1} \mat{C}_2 \\ 0 & \mat{I} \end{bmatrix} =  \begin{bmatrix} \mat{B}_1 & \mat{C}_1 - \mat{B}_1 \mat{B}^{-1}_2 \mat{C}_2 \\ \mat{B}_2& 0 \end{bmatrix}
\end{equation}
where $\mat{C}_1 - \mat{B}_1 \mat{B}^{-1}_2 \mat{C}_2$ is referred to as the Schur complement.

\begin{observation}
The matrix $\mat{B}_2^{-1}\mat{C}_2$ encodes the column operations required to zero out $\mat{C_2}$, that is, $\mat{B}_2^{-1}\mat{C}_2=\mat{\Lambda}$.
\end{observation}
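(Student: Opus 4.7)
The plan is to read the observation as a direct dictionary between the Schur complement identity in equation~\eqref{eq:schur} and the column-operation encoding from the ``Column Operations via Matrix Multiplication'' paragraph. Both use right-multiplication by an upper-triangular block matrix of the form $\begin{bmatrix}\mat{I} & \mat{X}\\ 0 & \mat{I}\end{bmatrix}$ to update the right block of columns of $[\mat{A}\ \mat{B}]$; in the first display, $\mat{X} = -\mat{A}_2^{-1}\mat{B}_2$, and in the second, $\mat{X} = \mat{\Lambda}$. So the observation is really just pointing out that these two instances of the same block-matrix operation must agree.

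Concretely, I would apply the column-operation view to the block matrix $\begin{bmatrix}\mat{A}_1 & \mat{B}_1\\ \mat{A}_2 & \mat{B}_2\end{bmatrix}$, with $\begin{bmatrix}\mat{A}_1\\\mat{A}_2\end{bmatrix}$ playing the role of $\mat{A}$ and $\begin{bmatrix}\mat{B}_1\\\mat{B}_2\end{bmatrix}$ playing the role of $\mat{B}$. After right-multiplication by $\begin{bmatrix}\mat{I} & \mat{\Lambda}\\ 0 & \mat{I}\end{bmatrix}$, the bottom-right block is $\mat{B}_2 - \mat{A}_2\mat{\Lambda}$ (using the sign convention of the paper). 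Requiring this to vanish --- which is exactly what it means for $\mat{\Lambda}$ to ``encode the column operations required to zero out $\mat{B}_2$'' --- yields the linear system $\mat{A}_2 \mat{\Lambda} = \mat{B}_2$. By the assumed non-singularity of $\mat{A}_2$, this system has the unique solution $\mat{\Lambda} = \mat{A}_2^{-1}\mat{B}_2$, which is precisely the content of the observation.

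There is no real obstacle here; the entire argument is a one-line linear-algebra rewriting of equation~\eqref{eq:schur}. The only subtlety worth flagging is that $\mat{\Lambda}$ is only determined up to an overall sign, reflecting the convention relating $\mat{B}' = \mat{B} - \mat{A}\mat{\Lambda}$ (or $\mat{B}' = \mat{B} + \mat{A}\mat{\Lambda}$) to the off-diagonal block in the right-multiplying matrix. Once the convention is fixed consistently, the identification $\mat{\Lambda} = \mat{A}_2^{-1}\mat{B}_2$ is immediate from the uniqueness of the solution to $\mat{A}_2\mat{\Lambda} = \mat{B}_2$.
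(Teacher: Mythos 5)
Your argument is correct, and in fact the paper offers no proof of this observation at all, so there is nothing to compare it against; what you have written is the natural elaboration any reader would supply. The sign remark you flag is a genuine point: the paper's ``Column Operations via Matrix Multiplication'' paragraph is internally inconsistent, since the displayed right-multiplication by $\bigl[\begin{smallmatrix}\mat{I} & \mat{\Lambda}\\ 0 & \mat{I}\end{smallmatrix}\bigr]$ yields $\mat{B}' = \mat{B} + \mat{A}\mat{\Lambda}$, whereas the sentence that follows asserts $\mat{B}' = \mat{B} - \mat{A}\mat{\Lambda}$. The observation $\mat{\Lambda} = \mat{A}_2^{-1}\mat{B}_2$, and the subsequent use in \cref{alg:col-tree-no-inversion} (line~\ref{line:update} followed by the subtraction $\mat{R}[\bar{L},C] \gets \mat{R}[\bar{L},C] - \mat{R}[\bar{L},B]\cdot \mat{\Lambda}$), both commit to the $\mat{B}' = \mat{B} - \mat{A}\mat{\Lambda}$ convention, which is the reading you correctly adopt; under the other convention one would instead write $\mat{\Lambda} = -\mat{A}_2^{-1}\mat{B}_2$.
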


A key fact we will use in the algorithm is that matrix inversion has the same algorithmic complexity as matrix multiplication. 
\begin{restatable}{theorem}{thmmatrixinv}
    \label{thm:matrix_inv}
    Inverting a (square) non-singular  (upper or lower) triangular matrix can be done in matrix multiplication time.
\end{restatable}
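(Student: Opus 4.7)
The plan is to use the classical divide-and-conquer approach that reduces triangular inversion to two half-size triangular inversions plus a constant number of matrix multiplications. I will describe it for the upper-triangular case; the lower-triangular case follows either by transposing or by a symmetric argument.

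First, assume without loss of generality that $n$ is a power of two (otherwise pad the matrix with an identity block in the lower-right corner, which preserves both triangularity and non-singularity and at most doubles the size). Write the input triangular matrix in block form
\[
\mat{T} = \begin{bmatrix} \mat{T}_1 & \mat{C} \\ 0 & \mat{T}_2 \end{bmatrix},
\]
where $\mat{T}_1, \mat{T}_2$ are $\tfrac{n}{2} \times \tfrac{n}{2}$ upper-triangular (hence non-singular, since their diagonals are sub-diagonals of $\mat{T}$) and $\mat{C}$ is an arbitrary $\tfrac{n}{2}\times \tfrac{n}{2}$ block. A direct verification, obtained either by block multiplication or as an instance of the Schur complement identity~\eqref{eq:schur} applied to the block structure, shows that
\[
\mat{T}^{-1} = \begin{bmatrix} \mat{T}_1^{-1} & -\mat{T}_1^{-1}\,\mat{C}\,\mat{T}_2^{-1} \\ 0 & \mat{T}_2^{-1} \end{bmatrix}.
\]

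The algorithm is then: recursively invert $\mat{T}_1$ and $\mat{T}_2$, then form the off-diagonal block by two $\tfrac{n}{2}\times\tfrac{n}{2}$ matrix multiplications (and a sign flip). Letting $I(n)$ denote the running time and $M(n) = O(n^{\matexp})$ the cost of multiplying two $n \times n$ matrices, the recurrence is
\[
I(n) \;=\; 2\,I(n/2) \;+\; O\!\left(M(n/2)\right).
\]

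The only point requiring care is solving the recurrence. Since $\matexp \geq 2$, we have $2 \cdot M(n/2) = 2 \cdot O((n/2)^{\matexp}) = O(n^{\matexp}/2^{\matexp-1}) \leq O(n^{\matexp}/2)$, so the geometric series over recursion levels is dominated by the top level and $I(n) = O(n^{\matexp}) = O(M(n))$. I expect this to be the main (but still routine) obstacle: confirming that the recursion collapses to the top-level cost relies on $\matexp > 1$, which holds trivially. The lower-triangular case is handled identically, with the off-diagonal block appearing below the diagonal instead of above.
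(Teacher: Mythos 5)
Your proof is correct and matches the paper's approach essentially verbatim: the same padding to a power of two, the same $2\times 2$ block decomposition of a triangular matrix into half-size triangular blocks with a rectangular coupling block, the same explicit block formula for the inverse, and the same recurrence $I(n) \le 2\,I(n/2) + O(M(n/2))$ solved by a geometric series. The only cosmetic difference is that you work with upper-triangular matrices while the paper works with lower-triangular ones; both notes that the other case follows by transposition.
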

\begin{proof}
See \cref{appendix:fastinversion} for a simplified proof for the special case of triangular matrices.
\end{proof}

\subsection{Fast Exhaustive Algorithm}

\begin{algorithm}[h]\caption{Column Algorithm($A = [i,j]$)}
\label{alg:col-tree-no-inversion}
\begin{algorithmic}[1]
   \If{$i = j$} \algorithmiccomment{leaf}\tikz[remember picture] \node (ifsta) {};
        \If{$\mat{R}[\cdot, i] = \mat{0}$} \tikz[remember picture] \node (ifstb) {};\Comment{If no pivot exists}
            \label{line:zero-column}
            \State $\ell \gets  \arg\min_{\ell>n} \mat{R}[\cdot, \ell] \neq 0$ \Comment{find the next pivot in $\mat{I}_{m+n}$}
            \State Permute columns $i$ and $\ell$
            \State $Z\gets Z \cup i$ \tikz[remember picture] \node (endifa) {};
        \EndIf
    \Else \tikz[remember picture] \node (elsea) {};
        \State $B \gets [i, (i+j)/2)$ \Comment{left child}
        \State $C \gets [(i+j)/2, j]$ \Comment{right child}
        \State Recurse on $B$
        \State Apply updates from columns $B$ to $C$: \tikz[remember picture] \node (update) {};
       \State \qquad $L \gets \langle \low(\ell) : \ell \in B \rangle$ \Comment{Implicit permutation; $L$ preserves order}
        \State \qquad $\mat{\Lambda} \gets \mat{R}[L,B]^{-1} \cdot \mat{R}[L,C]$ \Comment{$\mat{A}_2^{-1} \mat{B}_2$} \label{line:update}
        \State \qquad $\mat{R}[\bar{L},C] \gets \mat{R}[\bar{L},C] - \mat{R}[\bar{L},B]\cdot \mat{\Lambda}$ \Comment{Apply $B$ to $C$; Schur update $(k \times k) \cdot (k \times n)$} 
        \State \qquad $\mat{R}[L,C] =0$
        \State \qquad $\mat{V}[\cdot,C] \gets \mat{V}[\cdot,C] -\mat{V}[\cdot,B]\cdot \mat{\Lambda}$ \Comment{Matching update in $\mat{V}$}
        \State Recurse on $C$ \tikz[remember picture] \node (endsb) {};
               % \Remark{or maybe better ``on $\mat{R}[\bar{L},C]$'', although that's equivalent because of the sub-slicing}
                \vspace{-0.4cm}
    \EndIf
\end{algorithmic}
\begin{tikzpicture}[remember picture, overlay]
    % Coordinates are relative to the algorithm
    
    \path let \p1=(ifsta.south) in coordinate (a) at (0.7,\y1-0.1cm) ; 
    \path let \p2=(elsea.north) in coordinate (b) at (0.7,\y2) ; 
    \draw[gray] (a) -- (b);

    \path let \p1=(ifstb.south) in coordinate (a1) at (1.2,\y1-0.1cm) ; 
    \path let \p2=(endifa.south) in coordinate (b1) at (1.2,\y2) ;     
    \path let \p3=(endifa.south) in coordinate (c1) at (1.4,\y3) ;
    \draw[gray] (a1) -- (b1);
    \draw[gray] (b1) -- (c1);
    
    \path let \p1=(elsea.south) in coordinate (a2) at (0.7,\y1-0.1cm) ; 
    \path let \p2=(endsb.south) in coordinate (b2) at (0.7,\y2) ; 
    \path let \p3=(endsb.south) in coordinate (c2) at (0.9,\y3) ;
    \draw[gray] (a2) -- (b2);
    \draw[gray] (b2) -- (c2);

     \path let \p1=(update.south) in coordinate (a3) at (1.3,\y1-0.1cm) ; 
    \path let \p2=(endsb.north) in coordinate (b3) at (1.3,\y2+0.1cm) ; 
    \draw[gray] (a3) -- (b3);
    
    % \path let \p1=(forstb.south) in coordinate (a2) at (1.75,\y1-0.1cm) ; 
    % \path let \p2=(endsa.south) in coordinate (b2) at (1.75,\y2) ;     
    % \path let \p3=(endsa.south) in coordinate (c2) at (1.95,\y3) ;
    % \draw[gray] (a2) -- (b2);
    % \draw[gray] (b2) -- (c2);

\end{tikzpicture}

\end{algorithm}

We present a fast version of exhaustive reduction in \cref{alg:col-tree-no-inversion}.
The general structure follows \cref{alg:lookahead}. However, rather than apply
the pivots when we find them, we apply them in batches via the Schur complement.
We first augment the boundary matrix with an identity matrix, which is of size at least $n$, for reasons we shall describe below. To simplify notation, we assume $n$ is a power of $2$. If it is not, we can set the size of the identity matrix to $m+n$ with $m<n$ such that $m+2n = 2^x$. Therefore, our initial matrices are:
\[\mat{R} = \begin{bmatrix} \bdr & 0  \\ 0 & \mat{I}_{m+n} \end{bmatrix}, \qquad\qquad \mat{V} = \mat{I}_{m+2n}.\]
Throughout the algorithm, we implicitly perform two types of permutations by
taking appropriate subsets of rows and columns.
The first are column permutations.  To ensure that the appropriate submatrices are invertible, we require
that after processing $k$ columns of $\mat{R}$, there are $k$ pivots, or, equivalently,
that the matrix corresponding to the processed columns is full (column) rank.
Because cycles reduce to zero columns, the if-statement in
\cref{line:zero-column} permutes a zero column from $\mat{R}$
with a non-zero column from the identity matrix $\mat{I}_{m+n}$, and records the
indices of such columns in list $Z$, to eventually undo the permutation.

\begin{figure}[htbp]
\begin{center}
 \begin{tikzpicture}
        \node[anchor=south west, inner sep=0] (image) at (0,0) {\includegraphics[width=\textwidth,page=16]{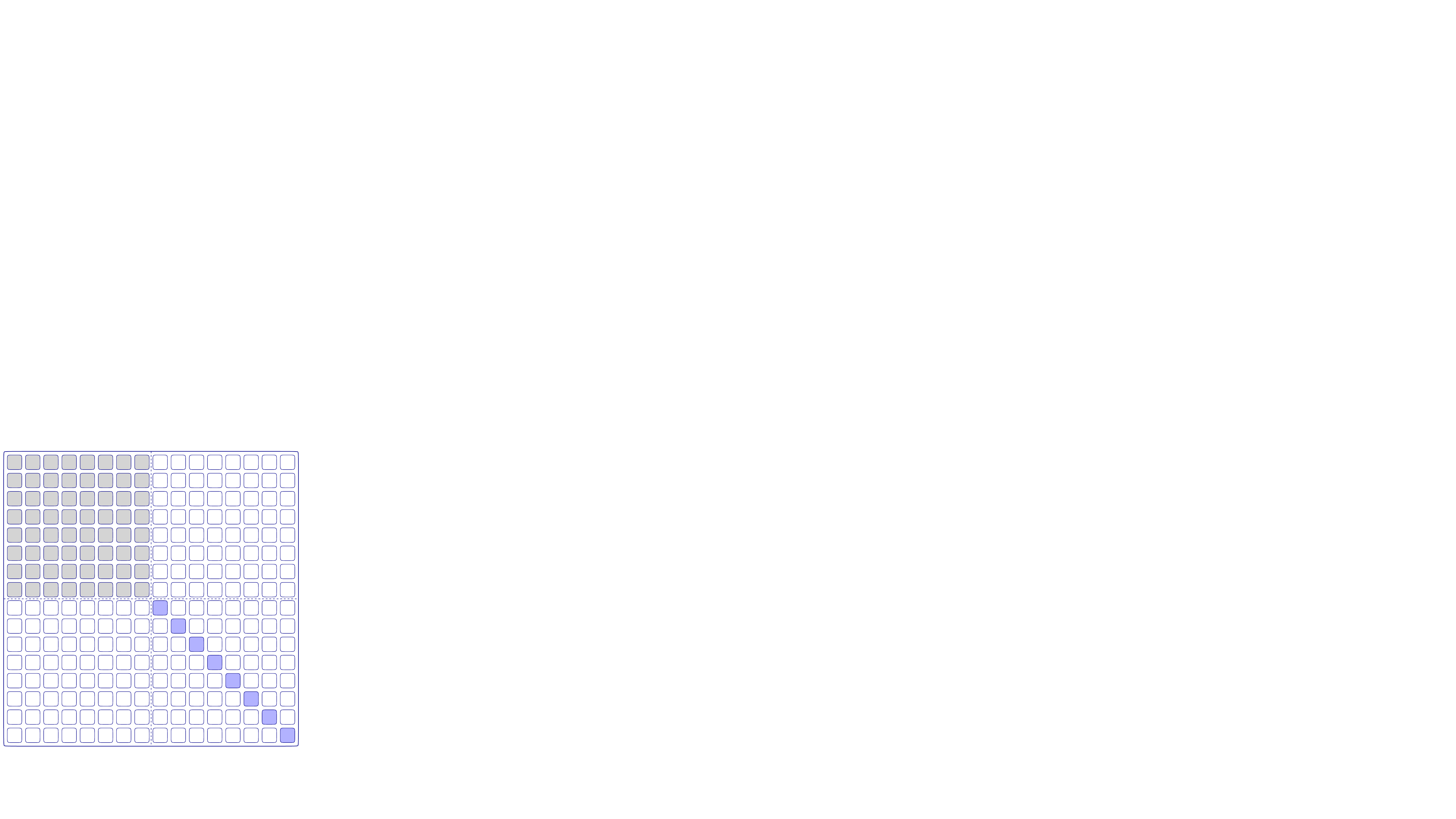}}; %
        \begin{scope}[x={(image.south east)}, y={(image.north west)}] 
            \node[anchor=center] (s1)  at (0.12,-0.1) {(a)};
            \node[anchor=center] (s2)  at (0.37,-0.1) {(b)};
            \node[anchor=center] (s3)  at (0.627,-0.1) {(c)};
            \node[anchor=center] (s5)  at (0.882,-0.1) {(d)};
            \draw [thick, decorate, decoration = {calligraphic brace}] (0.122,0.88) --  (0.122,0.45);
            \node (submatrix1)  at (0.095,0.22) {\large{$\mat{I}$}};
            \node (submatrix2)  at (0.147,0.66) {\large{$\mat{D}$}};
            \draw [thick, decorate, decoration = {calligraphic brace}] (0.115,0.01) --  (0.115, 0.435);
           \draw [thick, decorate, decoration = {calligraphic brace}] (0.255,0.9) --  (0.315, 0.9);
           \draw [thick, decorate, decoration = {calligraphic brace}] (0.316,0.9) --  (0.373, 0.9);
           \node (submatrix3)  at (0.286,0.975) {$k$};
           \node (submatrix3)  at (0.346,0.975) {$k$};
        \end{scope}
     \end{tikzpicture}
% \centering\includegraphics[width=\textwidth,page=16]{figures/matrix1.pdf}
\caption{\label{fig:col_permute} An example of the column permutation in the exhaustive algorithm. (a) Initially, we append an identity matrix to the boundary matrix. (b) Assume we have processed the first $k$ columns. The blue entries represent non-zero entries, and the dark blue boxes represent the pivots. To apply these pivots to next $k$ columns (shown in gray), the first $k$ columns must contain $k$ pivots. (c-d) If there is a  a zero column (shown in orange),  we transpose it with the first non-zero entry in the appended identity matrix, ensuring the processed $k$ columns all have pivots. }
\end{center}
\end{figure}

We must also consider row permutations.
To apply \cref{thm:matrix_inv}, we must find the  inverse $\mat{R}^{-1}[L,B]$ in
\cref{line:update}, which is used in the Schur complement. We construct a sequence
$L = \langle \low_R[\cdot,\ell ] : \ell \in B \rangle$ of the
pivot rows in columns $B$. Denoting by $\bar{L}$ the sequence of
rows outside of $L$, we denote a permutation that places rows $L$ below $\bar{L}$
as $\mat{P}_L$:
\begin{equation}\label{eq:column_alg_row_permute}
    \left( \mat{P}_L \cdot \mat{R} \right) =
        \begin{bmatrix}
            \mat{R}[\bar{L}, \cdot] \\
            \mat{R}[L,\cdot]
        \end{bmatrix}.
\end{equation}
See \cref{fig:row_permute}(b-c). The permutation is explicitly given by, 
\[P_L: \low (B(i))\mapsto m+n-|B|+i \qquad \mbox{for}\;\; i=1,\ldots,|B|\]
where $B(i)$ is the $i$-th entry of the indices in $B$.

\begin{figure}[htbp]
\begin{center}
\begin{tikzpicture}
        \node[anchor=south west, inner sep=0] (image) at (0,0) {\includegraphics[width=\textwidth,page=17]{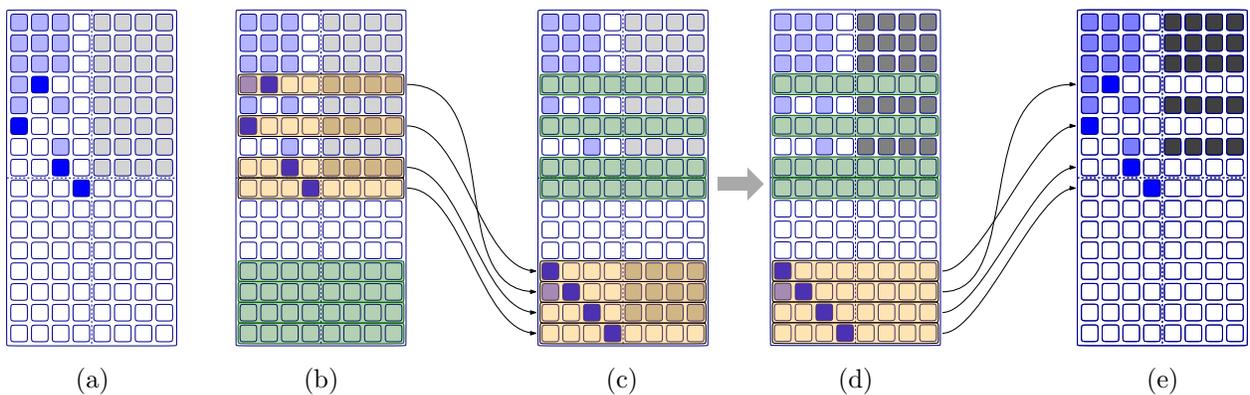}}; %
        \begin{scope}[x={(image.south east)}, y={(image.north west)}] 
            \node[anchor=center] (s1)  at (0.07,-0.1) {(a)};
            \node[anchor=center] (s2)  at (0.254,-0.1) {(b)};
            \node[anchor=center] (s3)  at (0.496,-0.1) {(c)};
            \node[anchor=center] (s4)  at (0.684,-0.1) {(d)};
            \node[anchor=center] (s5)  at (0.931,-0.1) {(e)};
            % \draw [ultra thick, decorate, decoration = {calligraphic brace}] (0.122,0.88) --  (0.122,0.45);
            % \node (submatrix1)  at (0.1,0.22) {\large{$\mat{I}$}};
            % \draw [ultra thick, decorate, decoration = {calligraphic brace}] (0.115,0.01) --  (0.115, 0.435);
          %  \draw [ultra thick, decorate, decoration = {calligraphic brace}] (0.24,0.9) --  (0.4, 0.9);

        \end{scope}
     \end{tikzpicture}
% \centering\includegraphics[width=\textwidth,page=17]{figures/matrix1.pdf}
\caption{\label{fig:row_permute} An example of the row permutations. (a) A submatrix consisting of 8 columns where the 4 columns on the left have been reduced, and we need to apply the pivots to the following 4 columns (in gray). (b-c) Each row which has a pivot gets permuted to the bottom of the matrix in the same order as the columns --  resulting in a lower triangular matrix. (d) Applying the Schur complement zeros out the gray entries on the bottom right and we update the entries in the upper right (dark gray). (e) With the pivot rows zeroed out on the right, we reverse the  permutation.  }
\end{center}
\end{figure}

\begin{lemma}\label{lem:triangular}
    After permuting the rows by $\mat{P}_L$, the bottom-left $\matsize{k}{k}$ matrix is lower triangular, where $k = |L|$.
\end{lemma}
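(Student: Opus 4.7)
The plan is to identify the bottom-left $k \times k$ block of $\mat{P}_L \cdot \mat{R}$ as the submatrix whose $(i,j)$-entry is $\mat{R}[L(i), B(j)]$, where $L(i) = \low(B(i))$. This follows directly from the explicit form of $\mat{P}_L$, which sends row $\low(B(i))$ to position $m+n-k+i$, so that the bottom $k$ rows (in top-down order) are $L(1), L(2), \ldots, L(k)$, while the leftmost $k$ columns of the sub-problem are $B(1), \ldots, B(k)$ in order. Showing lower-triangularity then reduces to verifying two facts: (i) the diagonal entries $\mat{R}[L(i), B(i)]$ are non-zero, and (ii) the super-diagonal entries $\mat{R}[L(i), B(j)]$ vanish whenever $j > i$. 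Fact (i) is immediate: $\mat{R}[L(i), B(i)] = \mat{R}[\low(B(i)), B(i)]$ is the pivot of column $B(i)$ and therefore non-zero by definition of $\low$.

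For fact (ii), I would establish the following invariant by induction on the depth of the recursion in \cref{alg:col-tree-no-inversion}: after the recursive call on an interval $A = [i,j]$ returns, for any $c_1, c_2 \in A$ with $c_1 < c_2$ and $\low(c_1)$ defined, one has $\mat{R}[\low(c_1), c_2] = 0$. The base case $|A| = 1$ is vacuous. For the inductive step, split $A$ into $B' = [i, (i+j)/2)$ and $C' = [(i+j)/2, j]$. The recursive call on $B'$ yields the invariant inside $B'$; the Schur-complement update in \cref{line:update} zeros out exactly $\mat{R}[L_{B'}, C']$, where $L_{B'}$ is the list of pivot rows of $B'$, giving the invariant for the mixed case $c_1 \in B'$, $c_2 \in C'$; and the recursive call on $C'$ gives the invariant inside $C'$. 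Since the lemma is applied after the recursive call on $B$ in the current invocation, this invariant applied with $A = B$ yields exactly $\mat{R}[\low(B(i)), B(j)] = 0$ for all $i < j$, completing the proof.

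The subtle step I would be most careful with is verifying that the recursive call on $C'$ does not destroy the zeros introduced by the Schur-complement update, i.e., that it preserves $\mat{R}[\low(c_1), c_2] = 0$ for $c_1 \in B', c_2 \in C'$. The key observation is that all column operations carried out by the recursion on $C'$ only combine columns inside $C'$; since every such column has a zero in each row $\low(c_1)$ (for $c_1 \in B'$), any linear combination remains zero in that row. A direct consequence is that pivots newly discovered in $C'$ must lie in rows disjoint from $L_{B'}$, so combining the left and right halves is consistent and the invariant extends to all of $A$. With this invariant in hand, the lower-triangularity of $\mat{R}[L, B]$ (and hence invertibility, enabling the use of \cref{thm:matrix_inv} in the surrounding algorithm) is immediate.
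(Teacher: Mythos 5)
Your proof is correct and takes essentially the same approach as the paper: identify the bottom-left block as $\mat{R}[L,B]$, observe that the diagonal entries are pivots (hence non-zero), and argue that entries $\mat{R}[\low(B(i)),B(j)]$ with $j>i$ were zeroed out at the level of the recursion where $B(i)$ fell into the left half and $B(j)$ into the right. The paper modularizes the second step as its Lemma~\ref{lem:recurse} and cites it, while you re-derive it inline as a recursive invariant; your version is actually slightly more careful, in that you explicitly verify the ``persistence'' of the zeros through later column operations inside $C'$ (and the orthogonality of newly discovered pivots with $L_{B'}$), which the paper's proof of Lemma~\ref{lem:recurse} leaves implicit.
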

We delay the proof until after the description of the algorithm.  We now follow a recursion: we split matrix $\mat{R}$ into  the
first half of the columns denoted by $B$ and the second half denoted by $C$. We proceed to call the
algorithm on the first half of the matrix. Once we reach a leaf in the
recursion tree (shown in Figure \ref{fig:recursion}), the matrix consists of a single column. The key invariant is that whenever we
reach a leaf, the corresponding column is reduced with respect to all the columns which come prior to
it in the filtration. For the first column, this is tautologically true and we delay the proof for the general case until later.
%
%Assuming this is true, there are two possibilities, either the vector is zero or non-zero. If it is non-zero, it has a lowest entry, i.e. a pivot.  If it is a zero vector, we perform a column permutation with the first non-zero column from $\mat{R}[\cdot,n+1,2n+m]$ -- the identity matrix we padded $\mat{D}$ with to get $\mat{R}$ -- and add this column index to $Z$ so that we may zero it out at the end of the algorithm. As described about, this is to ensure that after $k$ steps, the first $k$ columns have a rank of $k$ --  allowing us to use the Schur complement.  %Finally, we update the permutation matrix $\mat{P}$ with the transposition $low_R(k) \leftrightarrow n+k$.

\begin{figure}[t]
\centering\includegraphics[page=27,width=0.4\textwidth]{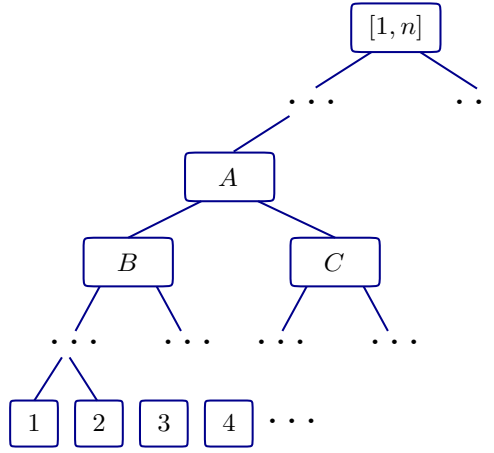}
%see also 27 for the standard one
\caption{\label{fig:recursion} The standard binary recursion tree in how we split the matrix by column. Observe that $|B| = |C|=|L|$ in \cref{alg:col-tree-no-inversion} and that $\mat{R}[L,B] = (\mat{P}_L \cdot \mat{R})[\{n-|B|+1,\ldots, n\},B]$, the bottom $|B|$ rows in the permuted matrix.     }
\end{figure}
If we are not in the base case (a single column), we must apply the pivots in the columns indexed by $B$ (left half of the submatrix) to the the columns indexed by $C$ (right half of the submatrix). As described in \cref{eq:column_alg_row_permute}, we construct the permutation
 $\mat{P}_L$ such that
\[
    (\mat{P}_L \cdot \mat{R})[\cdot, B\cup C]) =
        \begin{bmatrix}
            \mat{R}[\bar{L}, B] &  \mat{R}[\bar{L},C] \\
            \mat{R}[L,B]        &  \mat{R}[L,C]
        \end{bmatrix}
\]
By construction of $\mat{P}_L$, $\mat{R}[L,B]$ is lower-triangular with
non-zero diagonal entries and so is full-rank.
We can then apply the Schur complement to zero out all the corresponding rows in
the columns $C$,
$\mat{R}[L,C] \gets 0$ and update the rest of the rows:
\[
    \underbrace{\mat{R}[\bar{L},C]}_{\mat{C}_1\vphantom{\mat{B}_2^{-1}}}\gets
    \underbrace{\mat{R}[\bar{L},C]}_{\mat{C}_1\vphantom{\mat{B}_2^{-1}}}
        - \underbrace{\mat{R}[\bar{L},B]}_{\mat{B}_1\vphantom{\mat{B}_2^{-1}}} \cdot
          \underbrace{\mat{R}[L,B]^{-1}}_{\mat{B}_2^{-1}} \cdot \underbrace{\mat{R}[L,C]}_{\mat{C}_2\vphantom{\mat{B}_2^{-1}}}
    =
    \mat{R}[\bar{L},C] - \mat{R}[\bar{L},B] \cdot \mat{\Lambda}
\]
%We observe that this implicit permutation can be done by first applying $\mat{P}_L$, then performing the update, and finally unpermuting i.e. applying $\mat{P}_B^{-1}$.

\noindent To update matrix $\mat{V}$, we perform the same operations
$\mat{\Lambda} = \mat{R}[L,B]^{-1} \cdot \mat{R}[L,C]$ on $\mat{V}$:
\[
    \mat{V}[\bar{L},C] \gets \mat{V}[\bar{L},C] - \mat{V}[\bar{L},C] \cdot \mat{\Lambda}
\]

Once the left half of the matrix has been applied to the right half, we
recurse on the second half, i.e., $C$. Here again,
because $\mat{R}[L,B]$ is lower-triangular full-rank and $\mat{R}[L,C] = 0$,
the first column of $\mat{R}[\cdot,C]$ is reduced.
We now prove correctness. 

\begin{lemma}\label{lem:recurse}
When the recursion base case is reached for column $k$, $\mat{R}[\low(j),k]=0$ for all $j<k$.
\end{lemma}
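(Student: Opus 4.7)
The plan is to strengthen the lemma to an inductive invariant: \emph{whenever the call on an interval $A=[i,j]$ is entered, $\mat{R}[\low(j'), k] = 0$ for every column $k \in A$ and every $j' < i$ for which $\low(j')$ is defined.} The statement of the lemma then drops out at a leaf, where $A=\{k\}$ and the invariant specializes to exactly $\mat{R}[\low(j'), k]=0$ for all $j' < k$.

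I would prove the invariant by structural induction on the recursion tree. The root call on $[1,n]$ satisfies it vacuously, since there are no $j' < 1$. For the inductive step, suppose the invariant holds upon entering a call on $A=[i,j]$. The left sub-call on $B=[i,(i+j)/2)$ inherits the precondition verbatim (same lower bound $i$, smaller interval), so the invariant holds for it. Applying the inductive hypothesis to this sub-call yields two consequences upon its return: (i) every column $\ell \in B$ has $\mat{R}[\low(j'),\ell]=0$ for all $j' < \ell$, so in particular $\mat{R}[\low(j'), B]=\mat{0}$ whenever $j'<i$; and (ii) the new pivot rows $L=\{\low(\ell) : \ell \in B\}$ are distinct from $\{\low(j') : j' < i\}$ (all pivots are in distinct rows), so the latter rows sit inside $\bar L$.

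It remains to check that the Schur complement step preserves the invariant before recursing on $C=[(i+j)/2,j]$. Fix any $k \in C$ and any $j' < (i+j)/2$. If $j' \in B$, then $\low(j') \in L$ and the Schur update explicitly sets $\mat{R}[L,C]=\mat{0}$. If $j' < i$, then $\low(j') \in \bar L$ by (ii), so the update reads $\mat{R}[\low(j'),k] \gets \mat{R}[\low(j'),k] - \mat{R}[\low(j'),B]\cdot \mat{\Lambda}[\cdot,k]$; by (i), $\mat{R}[\low(j'),B]=\mat{0}$, so this row is unchanged, and its original value was already zero by the precondition on $A$. Thus the invariant holds upon entering the call on $C$, and the induction closes.

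The step that requires the most care, and the only non-trivial one, is the second case above: confirming that the Schur update against $B$ does not disturb zeros already present in rows $\low(j')$ for $j'<i$. It hinges on distinct pivot rows (so those rows fall in $\bar L$ and are only touched through $\mat{R}[\bar L, B]$) together with the induction hypothesis that the columns of $B$ are themselves already zero in those rows. For completeness, I would also remark that when a leaf handles an all-zero column by swapping in a column from the appended identity block, the new pivot lies in a row indexed beyond $n$, which cannot collide with any prior pivot row and therefore cannot break the invariant at subsequent calls.
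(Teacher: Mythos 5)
Your proof is correct, and it is essentially the paper's argument made precise rather than a genuinely different route. The paper's brief proof observes that $\mat{R}[\low(i),k]$ is zeroed at the unique recursion level where $i\in B$ and $k\in C$, but it leaves implicit that this zero survives the later Schur updates performed while recursing on $C$. You make that survival explicit: your consequences (ii) (distinct pivot rows, so $\low(j')\in\bar{L}$ for $j'<i$) and (i) ($\mat{R}[\low(j'),B]=\mat{0}$, so the Schur update acts trivially on those rows) are exactly the facts needed to close the gap, and they are indeed the crux of the matter. One bookkeeping remark: consequence (i) is a \emph{postcondition} of the recursive call on $B$, not an instance of the precondition invariant you maintain, so the cleanest organization is a simultaneous pre/post-condition induction on the recursion tree, where the postcondition states that after the call on $[i,j]$ returns, $\mat{R}[\low(j'),k]=0$ for every $k\in[i,j]$ and every $j'<k$ with $j'\le j$, and columns with index below $i$ are unchanged. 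Your closing remark about the zero-column swap (the appended identity block places the new pivot in a row beyond $n$, so it cannot collide with any prior pivot row) is also correct and worth keeping.
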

\begin{proof}
For $k=1$, this is tautological since $\low  (j)$ is empty. For $k>1$, we observe that entry $\mat{R}[\low  (i),k]$ was made zero when $i\in B$ and $k\in C$. As the recursion always splits the column range in half, this condition must be satisfied somewhere in the recursion before the base case of $k$ is reached.
%\textcolor{red}{We could give it explicitly in terms of $i$ and $k$ but its a pain} 
\end{proof}

\begin{proof}[Proof of Lemma~\ref{lem:triangular}]
    We observe that when $\mat{P}_L$ is applied, the columns corresponding to $L$ have a pivot.  By construction,  $\mat{P}_L$ places the pivots onto the diagonal in the bottom $|L|$ rows. For any of these rows above the diagonal, the entries are 0 by Lemma~\ref{lem:recurse}, implying the bottom $k\times k$ matrix is lower triangular. 
    % In the exhaustive algorithm, matrix $\mat{R}$ has the
    % property that all the entries to the right of the pivot are 0 (see orange
    % elements in Figure~\ref{fig:row_permute}). As the first $k$ columns are
    % reduced by assumption, the result follows.
    % \Remark{This probably should be tweaked to state an invariant.}
\end{proof}
\begin{theorem}
Algorithm \ref{alg:col-tree-no-inversion} computes the persistence diagram correctly.
\end{theorem}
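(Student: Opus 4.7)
The plan is to establish correctness in three pieces: (i) the invariant $\mat{R} = \bdr \mat{V}$ is preserved throughout the recursion, (ii) when the recursion terminates, the $\bdr$-portion of $\mat{R}$ is column-reduced in the usual sense, and (iii) the auxiliary column permutations tracked by $Z$ do not affect the pivot positions in the original columns, so the extracted diagram agrees with that of the standard exhaustive reduction.

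For (i), I would induct on the recursion tree. At a leaf no update occurs, so the invariant is trivial. At an internal node, after the recursive call on $B$ the invariant holds for those columns by induction. The update step performs $\mat{R}[\cdot,C] \gets \mat{R}[\cdot,C] - \mat{R}[\cdot,B]\cdot\mat{\Lambda}$ and the identical operation on $\mat{V}[\cdot,C]$. By the discussion of column operations via right-multiplication preceding \cref{lem:perm}, this is equivalent to right-multiplying the stacked matrix $[\mat{R}\mid\mat{V}]$ by the block upper-triangular matrix with $\mat{\Lambda}$ in the $B,C$ block, so $\bdr\mat{V} = \mat{R}$ is preserved. Recursing on $C$ then preserves the invariant on the whole matrix. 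Validity of the Schur step requires $\mat{R}[L,B]$ to be invertible, which is exactly what \cref{lem:triangular} guarantees (lower triangular with non-zero diagonal) together with \cref{thm:matrix_inv}.

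For (ii), I would use \cref{lem:recurse}: by the time the base case at column $k$ is reached, $\mat{R}[\low(j),k] = 0$ for every $j < k$. Hence any non-zero entry of column $k$ in $\mat{R}$ lies in a row that is not the pivot of any earlier column; equivalently, the pivot function on $\mat{R}$ is injective. This is precisely the reducedness condition required to read off the diagram from $\mat{R}$.

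For (iii), I would observe that the swap in \cref{line:zero-column} only fires on a column that is zero after reduction, i.e., a positive-simplex column of $\bdr$. It is replaced by a standard basis column from the appended $\mat{I}_{m+n}$ whose pivot row has not been used yet (since each identity column has a distinct pivot row, lying strictly below those of $\bdr$). These swapped-in identity columns serve only to keep the processed block full column-rank so the Schur step remains applicable; they do not participate in the $\bdr$-block's pivot pairing. Reversing the swaps recorded in $Z$ at the end restores zero columns at exactly the positive-simplex positions of $\bdr$. Combining this with the uniqueness of the pivot set across valid reductions (a standard fact, following from the argument used in the proof of the lemma on $\mat{V}$-representatives earlier) shows that the pairs $(\low(j),j)$ and the unpaired indices recovered from the $\bdr$-block of $\mat{R}$ are exactly those produced by \cref{alg:lookahead}, so the persistence diagram is correct.

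The main obstacle is part (iii): carefully disentangling the implicit row permutations $\mat{P}_L$ (which are local to each recursive call and reversed by the update) from the column permutations logged in $Z$ (which persist and must be undone at the end), and arguing cleanly that the extracted pivots of the $\bdr$-block depend only on the filtration order, not on the choices made by the algorithm.
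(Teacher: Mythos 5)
Your proposal follows essentially the same line as the paper: establish that $\mat{R}=\bdr\mat{V}$ is maintained, invoke \cref{lem:recurse} to conclude that by the time column $k$ is a leaf it is reduced against all earlier pivots, and appeal to pivot/pairing uniqueness (the paper cites the Pairing Uniqueness Lemma of Cohen-Steiner et al., which is the formal version of the uniqueness fact you invoke) to identify the resulting diagram with the standard one. Your part (iii) is a useful expansion of a point the paper's terse proof leaves implicit — namely that the identity columns swapped in via $Z$ have pivots strictly below row $n$ and so never perturb the $\bdr$-block pivot pairing, and that the local row permutations $\mat{P}_L$ are transient — but it is filling in detail on the same path rather than taking a different one.
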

\begin{proof}
It suffices to show that for all $i$, $\low(i)$ is computed correctly. For $i=1$, this is trivially true. For $i>1$, we first note that at any step of the algorithm, we maintain 
\[\mat{R} = \mat{D}\cdot \mat{V},\]
as any operations on $\mat{R}$ are applied to $\mat{V}$.  
Assuming the pivots are correct for $j<i$, by the above and Lemma \ref{lem:recurse}, $\mat{V}$ gives a transformation depending only on columns $j<i$, such that all elements in $\mat{R}[\low (j),i]=0$. If $\low (i)$ does not exist, $D[\cdot,i]$ was in the span of the previous columns and hence a cycle.  If $\low (i)$ exists, it  is distinct from $\{\low (j)\}_{j<i}$ and so we have a new pairing. The correctness of this pairing follows from the Pairing Uniqueness Lemma~\cite{cohen2006vines}. 
\end{proof}
\noindent{\textbf{Running Time Analysis}}
The base case takes $O(1)$ time. For a general step where $|B|=|C|=k$,  applying
the update requires a row permutation of a $\matsize{2n}{2k}$ matrix which takes
$O(nk)$ time. We must compute $\mat{R}[L,B]^{-1}$ which is inverting a $\matsize{k}{k}$ matrix.   Multiplying it with $\mat{R}[L,C]$  takes $O(k^{\matexp})$ time as both are $\matsize{k}{k}$ matrices. Finally, we 
multiply this product with the $\matsize{n}{k}$ matrix $\mat{R}[\bar{L},C]$, which by Lemma~\ref{lem:block_multiplication}, takes
$O(nk^{\matexp-1})$ time.
%As $k\leq n$, the running time is in $O(k^{\matexp})$, which is in $O(nk^{\matexp-1})$, as the permutations take $O(nk)$ time.
Solving
the recursion, we find that the total running time is  $O(n^{\matexp})$ for
$\matexp>2$ or $O(n^2\log n)$ if $\matexp=2$. For completeness, we include the
derivation in  \cref{appendix:recursion-analysis}.

\section{Row Algorithm}
While \cref{alg:col-tree-no-inversion} produces the exhaustive reduction in matrix multiplication time, a natural question is whether the same can be done for  computing the lazy reduction (and its representatives).  In this section, we present an algorithm which achieves this. We first give the iterative version before describing how it can be computed in $O(n^{\matexp})$. 
\begin{algorithm}[h]\caption{Incremental Row Algorithm}
\label{alg:row-incremental}
\begin{algorithmic}[1]
\State $\mat{R} = \bdr$
 	\State $\mat{V} = \mat{I}_{n}, \mat{U} = \mat{I}_{n}$
	\For{$i = n$  to $1$}\tikz[remember picture] \node (forsta) {};
	   \If{$\lft (i)$ is defined}\tikz[remember picture] \node (ifsta) {};
	    \State $j \gets \lft(i)$
            \For{$j'>j$ and $\low(j')= i$ }\tikz[remember picture] \node (forstb) {};
                \State $\alpha \gets {\mat{R}[i,j']} / {\mat{R}[i,j]}$
                \State $\mat{R}[\cdot,j'] \gets \mat{R}[\cdot,j'] - \alpha \cdot \mat{R}[\cdot,j]$
                \State $\mat{V}[\cdot,j'] \gets \mat{V}[\cdot,j']- \alpha \cdot \mat{V}[\cdot,j]$
                \State $\mat{U}[j, \cdot] \gets \mat{U}[j,\cdot] + \alpha \cdot \mat{U}[j', \cdot]$\tikz[remember picture] \node (endsa) {};
                            \qquad (equivalently, $\mat{U}[j,j'] \gets \alpha$)
            \EndFor
	   \EndIf
        \EndFor
                    \vspace{-0.4cm}
\end{algorithmic}
 \begin{tikzpicture}[remember picture, overlay]
    % Coordinates are relative to the algorithm
    
    \path let \p1=(forsta.south) in coordinate (a) at (0.7,\y1-0.1cm) ; 
    \path let \p2=(endsa.south) in coordinate (b) at (0.7,\y2) ; 
    \path let \p3=(endsa.south) in coordinate (c) at (0.9,\y3) ; 
    \draw[gray] (a) -- (b);
    \draw[gray] (b) -- (c);
    
    \path let \p1=(ifsta.south) in coordinate (a1) at (1.2,\y1-0.1cm) ; 
    \path let \p2=(endsa.south) in coordinate (b1) at (1.2,\y2) ;     
    \path let \p3=(endsa.south) in coordinate (c1) at (1.4,\y3) ;
    \draw[gray] (a1) -- (b1);
    \draw[gray] (b1) -- (c1);

    \path let \p1=(forstb.south) in coordinate (a2) at (1.75,\y1-0.1cm) ; 
    \path let \p2=(endsa.south) in coordinate (b2) at (1.75,\y2) ;     
    \path let \p3=(endsa.south) in coordinate (c2) at (1.95,\y3) ;
    \draw[gray] (a2) -- (b2);
    \draw[gray] (b2) -- (c2);

\end{tikzpicture}
\end{algorithm}
The idea behind this reduction is to consider rows from the bottom up. Observe that a pivot in the bottom row can be directly identified in $\mat{D}$ as the earliest (leftmost) non-zero entry, similarly as the pivot in the first column can be directly identified. To formalize the notion of the earliest eligible pivot in a given row, we define
\[
    \lft(i) = \argmin\limits_j \left\{ \mat{R}[i,j]\neq 0 :  \low(j) = i \right\} %\text{\Remark{why not just $=i$?}}
\]
As in the case of $\low$, we only apply this function to $\mat{R}$. The function $\lft $ returns the earliest column $j$ which is non-zero and is the lowest such non-zero entry in its column.  This condition is important as the earliest (left-most) non-zero entry may have pivots below it. This column is used to zero out the $i$-th row in columns to the right provided they do not already contain a pivot, i.e., if $\low(j)=i$, then the column is applied.  This condition is why we obtain equivalence with the lazy reduction ---  we do not apply the pivot column to later columns which already contain a pivot. 
\begin{lemma}
    Algorithm~\ref{alg:row-incremental} is produces the same decomposition as the lazy reduction.% i.e., it returns $\mat{R}_{\ell}$. 
\end{lemma}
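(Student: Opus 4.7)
The plan is to show that the row algorithm performs exactly the same multiset of column operations (with matching coefficients) as the lazy reduction, only reordered. Both algorithms maintain $\mat{R} = \mat{D}\cdot\mat{V}$ with $\mat{V}$ upper triangular with unit diagonal, and every update in either has the form $\mat{R}[\cdot, j'] \gets \mat{R}[\cdot, j'] - \alpha\,\mat{R}[\cdot, j]$ with $j < j'$ and $\low(j) = \low(j') = i$ at the moment of the update. By the Pairing Uniqueness Lemma, both algorithms yield the same pivot pairing $\{(i, p_i)\}$, where $p_i$ denotes the unique column with $\low = i$ in the final reduction; in particular, the source of any update is necessarily $p_i$ for the corresponding row $i$.

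The first step is to verify that in the row algorithm, $\lft(i) = p_i$ at the moment row $i$ is processed. After row $i$'s inner loop, every column $j' > \lft(i)$ with current $\low(j') = i$ has had its $\mat{R}[i, j']$ zeroed, while $\lft(i)$ itself is never modified again: it never appears as a target (targets require $j' > \lft(i)$), and any later operation uses $\lft(i')$ with $i' < i$, which has $\mat{R}[i, \lft(i')] = 0$ since $i > i'$. Hence in the final matrix $\lft(i)$ is the unique column with $\low = i$, so Pairing Uniqueness forces $\lft(i) = p_i$.

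The second step is a nested induction that the two algorithms perform identical updates on each column. The outer induction is on the pivot row $i$, decreasing from $n$, asserting that the final state of $\mat{R}[\cdot, p_i]$ agrees in both algorithms. The inner induction is on the reduction step $m$ for an individual column $j$, asserting that $\mat{R}[\cdot, j]$ after its $m$-th update agrees. For the inductive step: the $(m+1)$-st reduction of $j$ uses source $p_{i_m}$ with $i_m = \low(\mat{R}[\cdot, j])$ and coefficient $\mat{R}[i_m, j] / \mat{R}[i_m, p_{i_m}]$; the numerator agrees by inner induction, and the denominator agrees because $p_{i_m}$ is already in its final state at this point---in the lazy reduction because $p_{i_m} < j$ has already been processed, and in the row algorithm because $p_{i_m}$'s own reductions can only occur at rows strictly greater than $i_m$, all processed before row $i_m$. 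Applying the outer induction to $p_{i_m}$ then supplies the required equality. Since on any column $j$ the operations occur in decreasing order of $i_m$ in both algorithms, the final columns of $\mat{R}$ coincide; the $\mat{V}$ and $\mat{U}$ updates mirror those on $\mat{R}$, so the entire decomposition agrees.

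The main obstacle is keeping the nested induction well-founded, particularly at the boundary where a pivot column $p_{i_m}$ is itself still being finalized. This is resolved by observing that $p_{i_m}$'s reductions all occur at rows strictly greater than $i_m$, so the outer induction is only invoked for pivot rows strictly above $i_m$ when establishing the denominator's consistency, and the recursion therefore terminates cleanly at $i = n$, where $p_n$ is the leftmost column with nonzero entry in row $n$ and undergoes no reductions at all.
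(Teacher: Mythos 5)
The paper does not give a direct proof: it simply cites Theorem~3.1 of de~Silva, Morozov, and Vejdemo-Johansson~\cite{de2011dualities}, which establishes precisely this equivalence between row- and column-oriented reduction. Your proposal instead constructs a self-contained argument, so the two proofs take genuinely different routes. Your strategy --- identify $\lft(i)$ with the final pivot column $p_i$ via the Pairing Uniqueness Lemma, then run a nested induction (outer on the pivot row $i$, inner on the reduction step $m$ of each column) showing that both algorithms apply the same sources with the same coefficients in the same per-column order --- is sound, and the well-foundedness observation at the end (denominators only invoke the outer hypothesis at rows strictly above $i_m$) is exactly what makes the nesting close. The payoff of your approach is that it keeps the paper self-contained and makes explicit \emph{why} the two reduction orders commute: every coefficient depends only on a fully-reduced source column and on the portion of the target already determined by strictly lower pivot rows. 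The cost is length; the citation is, of course, shorter.

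One spot deserves tightening. To argue that $\lft(i)$ is never modified after row $i$, you write that ``targets require $j' > \lft(i)$'' and that later sources $\lft(i')$ have $\mat{R}[i, \lft(i')] = 0$. The first clause only rules out $\lft(i)$ being a target within row $i$'s own inner loop; the second clause shows that no reduction at a later row $i' < i$ can \emph{change row $i$ of a target}, which is what you need for ``no column regains $\low = i$,'' but it does not by itself show that $\lft(i)$ is never a target. The direct (and simpler) reason is: at any later row $i' < i$, the inner loop's targets all satisfy $\low(j') = i'$, whereas $\lft(i)$ has $\low = i \neq i'$ (and keeps it, by induction on decreasing $i'$), so $\lft(i)$ can never be selected. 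With that correction the first step goes through cleanly and the rest of the argument stands.
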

%See \cite[Theorem 3.1]{de2011dualities}.
\begin{proof}
This is equivalent to \cite[Theorem 3.1]{de2011dualities}.
%     \Remark{Why not just cite the dualities paper?}
% First, we show that the algorithm gives the correct pairing. This proof is inductive. Let $n'$ be the last (i.e. bottom) non-zero row in $\mat{D}$. Let $j=\lft(n')$. The entry $[n',j]$ must be a pivot as regardless of the reduction which occur $\low(j)=n'$ when the $j$-th column is processed. Now assume all pairings are correct from $n,\ldots,i+1$. Letting $j=\lft(i)$, by definition of $\lft$, any non-zero entry in the $i$-th row which comes before $j$ is either empty or contains a pivot and $i=\low(j)$. The only thing which remains to be checked is that the correct column has a zero entry at $\mat{R}[i,j']$ where $j'$ is the ``correct'' pivot. This cannot occur as the we can restrict ourselves to the pivots which are both below and to the left of $\mat{R}[i,j']$. However, as the only operations are a basis change, this is a contradiction, as it would imply we could zero out $\mat{R}[i,j']$ in the column algorithm.
% To show equivalence with lazy reduction, we first observe that any operation in the lazy reduction is carried out in the algorithm. It is then sufficient to note that in the lazy reduction, a potential zeroing out of a row in column $j'$ using column $j$ is not carried out if $\low(j)>\low(j')$. As $i=\low(j')$, equivalence follows. 
%This follows directly from the observation that the resulting $\mat{R}$ matrix is in reduced column echelon form and the Unique Pairing Lemma \cite{}.
\end{proof}

\subsection{Fast Row Algorithm}
The fast version of \cref{alg:row-incremental} is given in \cref{alg:row-tree}.
As above, it reduces the matrix row by row from the bottom up. This variant does not require matrix inversion and requires less padding. The trade-off is that it requires a more complex sequence of updates. Assuming $\mat{D}$ is an $\matsize{n}{n}$ matrix,  we set $m<n$ such that $m+n = 2^x$. We  initialize our matrices as:

\[
\mat{R} = 
\left[
\begin{array}{l|c}
\bdr & \multirow{2}{*}{$\mat{I}_{m+n}$}\\
\mat{0}_m& 
\end{array}
\right], \qquad\qquad \mat{\Lambda} = \mat{I}_{m+n}. 
\]
%
%
%\Remark{I think it should be $R = [D \; I_m]$, but I'm not certain. We want a pivot in every row, right?}
%\Remark{I'm not convinced that what we compute is the actual matrix $V$.}
To simplify notation, from this point on, we assume $n$ is a power of 2 (as $m+n<2n$), so the resulting matrix is $\begin{bmatrix} \mat{D}& \mat{I}_n\end{bmatrix}$. This ensures that $\lft  (i)$ is defined for all $i$ in $\mat{R}$, i.e., the identity matrix ensures there are no zero rows.  

\begin{algorithm}[h]\caption{Row Algorithm($A = [i,j]$)}
\label{alg:row-tree}
\begin{algorithmic}[1]
    \If{$i = j$} \algorithmiccomment{leaf}\tikz[remember picture] \node (ifsta) {};
        \State $k \gets \lft  (n-i+1)$ 
        \Comment{Identify the pivot in row $(n-i+1)$}
            \State Update permutation matrix $\mat{P}$ with transposition $i \leftrightarrow k$.
            \State $(\mat{P} \!\cdot \!\mat{\Lambda}\!\cdot\!\mat{P} )[i,j] \gets
        -\frac{(\mat{R}\cdot\mat{P} )[n-i+1,j]}{(\mat{R}\cdot\mat{P})[n-i+1,i]}$ for $j > i$.
            \State $(\mat{R}\!\cdot\!\mat{P})[n-i+1, \{i+1, \ldots,n\}]\gets 0 $ \label{lin:zero-out} \Comment{Zero out $(n-i+1)$-st row;  see \cref{rem:pivots}};
        % \Statex \quad record corresponding column operations in row $(P \cdot U)[i, \cdot]$.
    \Else \tikz[remember picture] \node (elsea) {};
        \State $B \gets [i, (i+j)/2)$ \Comment{left child}
        \State $C \gets [(i+j)/2, j]$ \Comment{right child}
        \State $C_r \gets n-C+1$ \Comment{Rows are indexed top-down but the algorithm goes bottom up}
        \State Recurse on $B$
        \State Apply updates from rows $B$ to $C$:\tikz[remember picture] \node (updatea) {};\Comment{full row update; $(k \times k) \cdot (k \times n)$}
        \Statex \qquad $(\mat{R}\!\cdot\!\mat{P} )[C_r,\cdot ] \gets (\mat{R}\!\cdot\!\mat{P} )[C_r, \cdot ]  +
                    (\mat{R}\!\cdot\!\mat{P})[C_r,B]) (\mat{P}\!\cdot\!\mat{\Lambda}\!\cdot\!\mat{P})[B,\cdot]$ 
        \State Recurse on $C$\tikz[remember picture] \node (recursea) {};
        \State $\bar{A}_r = [1,i]$ \Comment{everything above A}
        \State Full column update:  \tikz[remember picture] \node (updateb) {};
        \Statex \qquad $(\mat{R}\!\cdot\!\mat{P})[\bar{A}_r, C] \gets
                    (\mat{R}\!\cdot\!\mat{P})[\bar{A}_r, C] + (\mat{R}\!\cdot\!\mat{P})[\bar{A}_r, B] \cdot (\mat{P} \!\cdot\! \mat{\Lambda}\!\cdot\!\mat{P})[B,C]$   
                    \tikz[remember picture] \node (ends) {\!\!};
                    \Comment{$(n \times k) \cdot (k \times k)$}
    \EndIf
                \vspace{-0.4cm}
\end{algorithmic}
\begin{tikzpicture}[remember picture, overlay]
    % Coordinates are relative to the algorithm
    
    \path let \p1=(ifsta.south) in coordinate (a) at (0.7,\y1-0.1cm) ; 
    \path let \p2=(elsea.north) in coordinate (b) at (0.7,\y2) ; 
    \draw[gray] (a) -- (b);

    \path let \p1=(elsea.south) in coordinate (a1) at (0.7,\y1-0.1cm) ; 
    \path let \p2=(ends.south) in coordinate (b1) at (0.7,\y2 -0.1cm) ; 
    \path let \p2=(ends.south) in coordinate (c1) at (0.9,\y2- 0.1cm) ; 
    \draw[gray] (a1) -- (b1) -- (c1);
    
    \path let \p1=(updatea.south) in coordinate (a2) at (1.2,\y1 - 0.05cm) ; 
    \path let \p2=(recursea.north) in coordinate (b2) at (1.2,\y2+ 0.05cm) ; 
    \draw[gray] (a2) -- (b2);

    \path let \p1=(updateb.south) in coordinate (a3) at (1.2,\y1 - 0.05cm) ; 
    \path let \p2=(ends.south) in coordinate (b3) at (1.2,\y2- 0.1cm) ; 
    \draw[gray] (a3) -- (b3);
\end{tikzpicture}

\end{algorithm}

It will be convenient to again consider permutations, but here they will be column permutations.  We build up the permutation incrementally. In the $i$-th step, we update the permutation with the column transposition
\[\pi(i):  \lft  (n-i+1) \leftrightarrow i. \]
which we use a shorthand for the two mappings $ \lft  (n-i+1) \mapsto i$ and $i\mapsto  \lft  (n-i+1)$.
As $\lft  $ is always defined, two columns are  permuted in each step.
%\Remark{In general, there is a lot of confusion below with the row enumeration.}
The permutation after $i$ steps is then given by
\[
    \mat{P}_i = \prod\limits_{j\leq i}\pi(j).
%\text{~\Remark{Would $\mat{P}_i = \prod_{j\leq i} \pi(j)$ be better? It's a composition after all.}}
\]
Note that we apply the permutations in order, i.e., for $i=1,2,\ldots$ While the columns corresponding to $\lft  (n-i+1)$ are fixed at $i$ for all permutations, the original column at $i$ may be permuted several times. 
In the algorithm, we omit the subscript; $\mat{P}$ refers to the accumulated permutation. As in the case of the column algorithm, we perform the permutations implicitly.  %As we will need to refer to the original ordering within the algorithm, we denote it as the inverse of $\pi$, i.e., $\pi^{-1}(i)$ will return the filtration index (before permutation) of $i$. \Remark{Where is $\pi^{-1}$ below?} 
We observe that in the permuted order,  we arrange pivots on the left along the anti-diagonal, see \cref{fig:row_alg_permutations}.

\begin{figure}
\begin{center}
\begin{tikzpicture}
        \node[anchor=south west, inner sep=0] (image) at (0,0) {\includegraphics[width=0.7\textwidth,page=24]{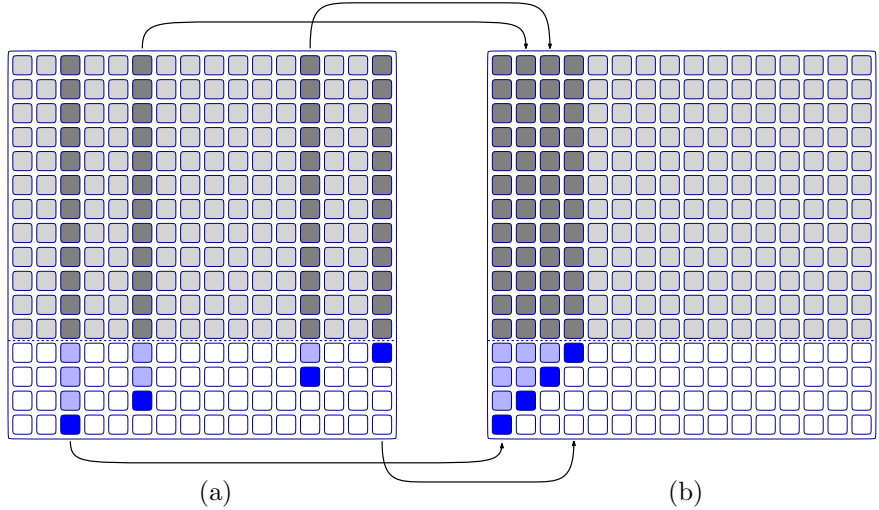}}; %
        \begin{scope}[x={(image.south east)}, y={(image.north west)}] 
            \node[anchor=center] (s1)  at (0.24,-0.02) {(a)};
            \node[anchor=center] (s2)  at (0.78,-0.02) {(b)};
%\centering\includegraphics[page=22,width=0.7\textwidth]{figures/matrix1.pdf}
\end{scope}
\end{tikzpicture}
\caption{\label{fig:row_alg_permutations} The permutation of the columns so that the pivots in the bottom $k$ rows are on the right. An important observation is that despite this reshuffling of columns, because of the definition of $\lft  (\cdot)$, we  never ``accidentally'' reduce columns which occur earlier in the filtration order.}
\end{center}
\end{figure}

% The idea behind this reduction is to consider rows from the bottom up. Observe that a pivot in the bottom row can be directly identified in $\mat{D}$ as the earliest (first from the left) non-zero entry, similarly as the first pivot in the first column can be directly identified. To formalize the notion of the earliest eligible pivot in a given row. 
% \[
%     \lft (i) = \argmin\limits_j \left\{ \mat{R}[i,j]\neq 0 :  \low (j) \leq i \right\}
% \]
% The function $\lft$ returns the earliest column $j$ which is non-zero and is the lowest such non-zero entry in its column. This condition is important as the earliest non-zero entry may have pivots below it. 
We keep track of the operations directly in the matrix $\mat{\Lambda}$ which is initialized as the identity matrix. %As in the column algorithm, the operations of applying the first column to the rest of the matrix are given by the first row of $\mat{\Lambda}$.  
Before delving into the details of the algorithm, assume that the all rows below the $i$-th row have been reduced. The updates in the $i$-th row are given by 
\[
    \left( \mat{P} \cdot \mat{\Lambda} \cdot \mat{P} \right)[i,j] =
        \begin{cases}
            -\frac{(\mat{R}\cdot \mat{P})[n-i+1,j]}{(\mat{R}\cdot \mat{P})[n-i+1,i]} &   j>i\\
            \hfil  1&n-i+1=j \\
            \hfil  0 &\mbox{else}
        \end{cases}
\]
We digress here to explain why this works. As mentioned above, if we consider the column-permuted matrix $\mat{R}\cdot \mat{P}$, it has the pivots in the bottom-left along the anti-diagonal (\cref{fig:row_alg_permutations}). The $i$-th column corresponds to the column given by $\lft  (n-i+1)$ (in the original filtration order). Hence, it is the earliest column with a non-zero entry in the $i$-th row from the bottom such that there is no pivot below. 
   For any non-zero entry to the right of $i$, it must occur to the right of $i$  in the original filtration order (before permuting), or it would have been returned by $\lft  (n-i+1)$ rather than the current $i$-th column in the permuted order. 
%%%I THINK WE SAY THIS ALREADY
\begin{remark}
    \label{rem:pivots}
   Hence in \cref{lin:zero-out},  all non-zeros in the row to the right of $i$ are  pivots (lowest entries), because every column with a pivot below $n-i+1$ has been moved to the left of $i$ in the permuted order and so no reduction will be applied, i.e., coefficient will be 0.
\end{remark}

% \begin{remark}
%     \label{rem:pivots}
%     In \cref{lin:zero-out}, note that all non-zeros in the row to the right of $i$ are pivots, because every column with a pivot below $(n-i)$ has been moved to the left of $i$.
% \end{remark}

Finally, as we permute the columns we must also permute the rows of the $\mat{\Lambda}$  matrix so that the recorded column operations match. This represents the base case of the recursion.   As $\mat{\Lambda}$ is initialized as the identity matrix,  the case $n-i+1=j$ is taken care of implicitly.

Returning to the algorithm, we proceed by recursing as in the  column algorithm, but on the rows rather than on the columns. We divide the rows of the sub-matrix into the top and bottom halves. We recurse first on the bottom half. Because the pivots are determined by rows below the current one, we can directly identify the pivot with $\lft $. 

The base case is a single row. By assumption, rows below the current row have been updated. At row $n-i+1$, we first find the left-most non-zero entry and perform a column permutation setting the column $\lft (n-i+1)$ to the $i$-th column, making $\mat{R}[n-i,i]$ a pivot as required. By the reasoning above, we may zero out the row, recording the operations in $\mat{\Lambda}$ and we can update $(\mat{R} \cdot \mat{P})[n-i,\{i+1, \ldots,n\}] \gets 0$.

%\begin{remark}
Though the recursion proceeds as in \cref{alg:col-tree-no-inversion}, splitting the matrix into the indices in the ``first half'' and ``second half,'' a complication is that the columns are processed left to right (same as the indexing), while the rows are processed bottom-up while being indexed top-down. Hence, we introduce:
\[C_r = n-C+1,\]
by which we mean for any $i\in C$, $i \mapsto n-i+1$. This indexes the rows from the bottom of the matrix as required by the algorithm. %\Remark{While we could permute the columns to the right and perform the operations to the left, this seems morally wrong.} 
%\end{remark}
Returning to the recursion, once we processed the columns for $B$, 
we first apply the updates to the rows $C_r$, 
\[
    (\mat{R} \cdot \mat{P})[C_r,\cdot ] \gets (\mat{R} \cdot \mat{P})[C_r, \cdot ]
    +  (\mat{R} \cdot \mat{P})[C_r,B] \cdot (\mat{P} \cdot \mat{\Lambda} \cdot \mat{P})[B,\cdot],
\]
shown in orange in \cref{fig:row_algorithm}. Note that though $B_r$ is shown in \cref{fig:row_algorithm}, it is not used explicitly in the algorithm as all the operations in those rows have already been performed. On the other hand, $B$ in $(\mat{P} \cdot \mat{\Lambda} \cdot \mat{P})[B,\cdot]$ and in $(\mat{R} \cdot \mat{P})[C_r,B]$ do not need to be reversed, as they both represent column operations which are in the standard order.  After this, the first row in $C_r$ is now up-to-date with all operations from rows below it. This allows us to  correctly identify $\lft $ for this row and  perform the column transposition accordingly. Hence, when we recurse on $C$, the base case can be carried out.

Returning from $C$, we complete the recursion by performing the update operations on the remaining parts of the columns in $C$, shown in green in Figure~\ref{fig:row_algorithm}. Note,  the columns are indexed by $C$ not $C_r$. 
Let $\bar{A}_r = \{1,\ldots, \min(C_r)-1\}.$
The column update is given as
\[
    (\mat{R} \cdot \mat{P})[\bar{A}_r, C] \gets (\mat{R}\cdot  \mat{P})[\bar{A}_r ,C ]  +
    (\mat{R} \cdot \mat{P})[\bar{A}_r,B] \cdot (\mat{P} \cdot\mat{\Lambda} \cdot \mat{P})[B,C]
\]

Now all columns prior to and including $C$ and all rows below and including $C_r$ are up to date. 
We show the update sequence in \cref{fig:row_algorithm} (omitting the permutations). Note that in the column update, the rows in $\bar{A}_r$ are correctly indexed, so no reversal is necessary.

\begin{figure}
\begin{center}
%see also 20, and 21 for different versions
\begin{tikzpicture}
        \node[anchor=south west, inner sep=0] (image) at (0,0) {\includegraphics[width=\textwidth,page=28]{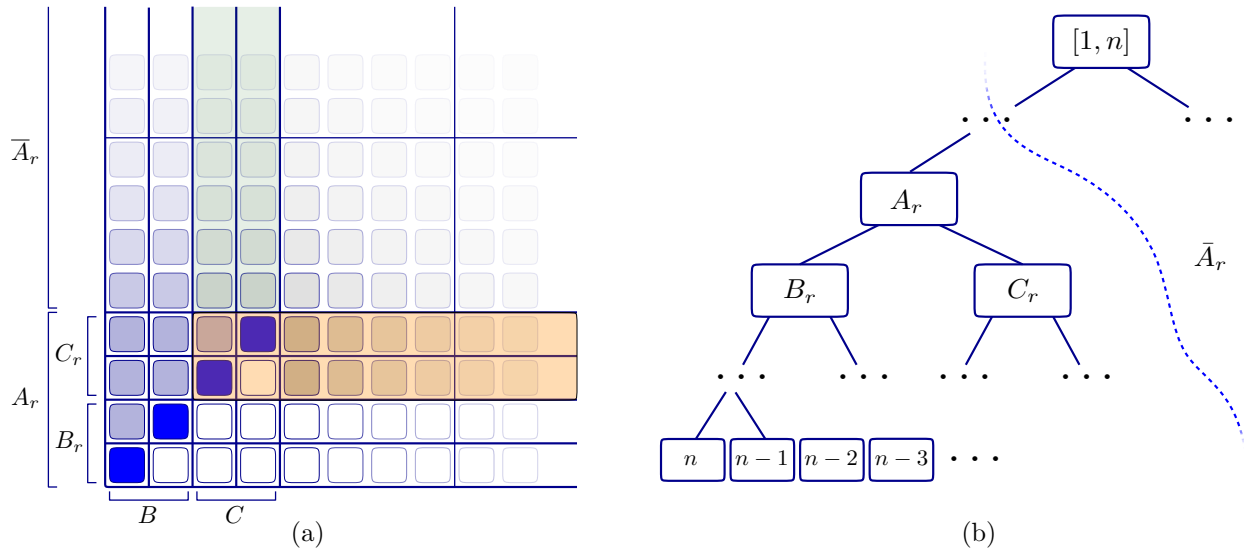}}; %
        \begin{scope}[x={(image.south east)}, y={(image.north west)}] 
            \node[anchor=center] (s1)  at (0.24,-0.02) {(a)};
            \node[anchor=center] (s2)  at (0.78,-0.02) {(b)};
%\centering\includegraphics[page=22,width=0.7\textwidth]{figures/matrix1.pdf}
\end{scope}
\end{tikzpicture}
% \centering\includegraphics[page=22,width=\textwidth]{figures/matrix1.pdf}
\caption{\label{fig:row_algorithm}The updating steps in \cref{alg:row-tree}. On the right we have the recursion tree for the rows, as opposed to the columns which is the same as in \cref{fig:recursion} --- as rows proceed bottom-up and columns proceed left to right. When we return from the recursion on $B$ (or $B_r$), we have zeroed out all the entries to the right of columns $B$ in rows $B_r$. We then first need to apply these to the rows in $C_r$ (line 11 in \cref{alg:row-tree}), shown in orange. We can then identify the correct pivot columns for rows $C_r$ (after recursing), and complete the step by applying the operations from columns $B$ to $C$ above. This is shown in green --- and corresponding to the right of $A_r=B_r\cup C_r$ in the recursion tree (the rows above $A_r$ -- denoted $\bar{A}_r$) to the right of the dashed line. This makes columns $B\cup C$ and rows $A_r$ completely processed/reduced.   }
\end{center}
\end{figure}

\begin{theorem}
Algorithm \ref{alg:row-tree} computes the lazy reduction.
\end{theorem}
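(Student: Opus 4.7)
The plan is to reduce this theorem to the preceding lemma, which states that Algorithm~\ref{alg:row-incremental} produces the lazy reduction. I would argue by induction on the recursion tree that Algorithm~\ref{alg:row-tree} performs exactly the same sequence of row-to-row reductions as Algorithm~\ref{alg:row-incremental}, when both are viewed as operating bottom-up on the augmented matrix, and that the column operations are faithfully accumulated in $\mat{\Lambda}$.

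The key invariant I want to maintain at the end of each recursive call on index range $A=[i,j]$ is threefold. First, the accumulated permutation has been extended by the transpositions $\pi(i),\pi(i+1),\ldots,\pi(j)$ in order, each corresponding to a correct application of $\lft$. Second, in the permuted matrix $\mat{R}\cdot\mat{P}$ the rows indexed by $A_r=n-A+1$ are reduced in the sense that their only non-zero entries in columns with index $>i$ lie on the anti-diagonal at positions $(n-t+1,t)$ for $t\in A$. Third, the block $(\mat{P}\cdot\mat{\Lambda}\cdot\mat{P})[A,\cdot]$ records exactly the coefficients that Algorithm~\ref{alg:row-incremental} would produce for the corresponding row eliminations. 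For the base case $i=j$, the invariant from prior calls ensures row $n-i+1$ already reflects all eliminations coming from lower rows, so $\lft(n-i+1)$ returns the same column that Algorithm~\ref{alg:row-incremental} would select; Remark~\ref{rem:pivots} then guarantees that only columns $j'$ with $\low(j')=n-i+1$ are affected, matching the conditional in the inner loop of Algorithm~\ref{alg:row-incremental}, and the recorded scaling factors match the $\alpha$ values there.

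For the inductive step on $A=[i,j]$ with children $B$ and $C$, I would verify three items. After the recursion on $B$ returns, the ``apply updates from rows $B$ to $C$'' step pushes the column operations recorded in $(\mat{P}\cdot\mat{\Lambda}\cdot\mat{P})[B,\cdot]$ into rows $C_r$, so that when the recursion on $C$ begins its base cases see rows consistent with the state Algorithm~\ref{alg:row-incremental} would have after processing rows $B_r$. The recursion on $C$ then, by induction, extends the invariant over $C$. Finally, the ``full column update'' propagates the operations in $(\mat{P}\cdot\mat{\Lambda}\cdot\mat{P})[B,C]$ into columns $C$ within rows $\bar{A}_r$, bringing all entries above $A_r$ into agreement with the operations already recorded and thereby re-establishing the invariant for $A$.

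The subtle point, and the place where the proof must be careful, is showing that the pivot chosen by $\lft$ at each base case matches the one Algorithm~\ref{alg:row-incremental} would select, despite the reshuffling induced by $\mat{P}$ and despite the fact that some earlier eliminations have only been partially applied to the row at that moment. This relies on two observations already implicit in Remark~\ref{rem:pivots}: any column whose pivot lies strictly below the current row has already been moved by a previous transposition to a permuted index less than $i$ and is therefore excluded from the search; and any column to the right of position $i$ carrying a non-zero entry in the current row has its own lower pivot, so neither algorithm attempts to reduce it. Once this equivalence of pivot choices and coefficients is established, the sequence of $(j,j',\alpha)$ triples produced by the two algorithms coincide, so the final $\mat{R}$ together with the column transformation encoded by $\mat{\Lambda}$ agrees with the output of Algorithm~\ref{alg:row-incremental}. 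Restricting to the first $n$ columns and discarding the appended identity yields the lazy reduction.
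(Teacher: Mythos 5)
Your proposal follows essentially the same approach as the paper: reduce correctness to equivalence with \cref{alg:row-incremental}, observe that the coefficients recorded in $\mat{\Lambda}$ at each base case match the $\alpha$'s of the incremental algorithm, and then argue that the recursion structure guarantees every column operation from a lower row is applied (via the ``apply $B$ to $C_r$'' step) before the base case for the current row, so that $\lft$ is evaluated on an up-to-date row. The paper's proof is terser and phrases the last part as ``any column operations coming from row $j<i$ fall in exactly one $B$ (with $i$ in $C$),'' whereas you package the same facts into an explicit three-part invariant maintained by induction on the recursion tree, which is a more formal but equivalent way of saying it.

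One small inaccuracy in your invariant's second clause: after the base case for row $n-t+1$, what holds is that $(\mat{R}\cdot\mat{P})[n-t+1,\{t+1,\ldots\}]=\mat{0}$; columns at permuted indices in $[i,t)$ can still carry non-zero entries in that row, since those are pivot columns for lower rows and their non-pivot entries above are never cleared by the lazy reduction. So the statement ``only non-zero entries in columns with index $>i$ lie on the anti-diagonal'' should be weakened to ``columns with index $>t$ are zero in row $n-t+1$.'' This does not affect the validity of the argument --- the property you actually use is that $\lft$ sees an up-to-date row and Remark~\ref{rem:pivots} restricts the eliminations to pivot columns --- but the invariant as written is stronger than what the algorithm guarantees.
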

\begin{proof}
The algorithm is equivalent to \cref{alg:row-incremental}, but it performs the operations by batches. Observe that in the base case for row $n-i+1$, the column operations recorded in $\mat{\Lambda}$ are equal to $\alpha$ in the $i$-th step of \cref{alg:row-incremental}. If we performed the operations on the entire column, we would, in fact, obtain exactly \cref{alg:row-incremental} (since there would be no need for any further updates in the recursion). 

Hence, to show correctness, we need to ensure that we can correctly compute $\lft(i)$ in each base case. This is trivially true in the first step. In further steps, we need to ensure that all the column operations have been applied to $(\mat{R}\cdot \mat{P})[C,\cdot]$. We can directly verify that any column operations coming from row $j<i$, fall in exactly one $B$ (with $i$ in $C$) in the recursion and hence are applied before the base case for $i$.
It only remains to show that the columns $(\mat{R}\cdot \mat{P})[\cdot,C]$ are up to date before the columns operations from rows in $C_r$ are applied in the appropriate rows of $(\mat{R}\cdot \mat{P})[\bar{A}_r,\cdot]$, which is done in line 14. 
\end{proof}

\begin{theorem}
    Matrix $\mat{U} = 2\cdot \mat{I} - \mat{\Lambda}$ satisfies $\mat{D} = \mat{R} \cdot \mat{U}$, or equivalently, $\mat{V} = \mat{U}^{-1}$.
\end{theorem}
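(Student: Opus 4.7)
The plan is to identify $\mat{U} := 2\mat{I} - \mat{\Lambda}$ with the matrix $\mat{U}$ that would be produced by the incremental lazy reduction \cref{alg:row-incremental}, whose correctness (together with the invariants $\mat{R} = \mat{D}\mat{V}$ and $\mat{V}\mat{U} = \mat{I}$ maintained from $\mat{V} = \mat{U} = \mat{I}$) then immediately gives $\mat{D} = \mat{R}\mat{U}$. The key observation is that $\mat{\Lambda}$ in \cref{alg:row-tree} and $\mat{U}$ in \cref{alg:row-incremental} record exactly the same coefficients in the same positions, but with opposite signs.

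First, I would verify that in \cref{alg:row-incremental} the row update $\mat{U}[j,\cdot] \gets \mat{U}[j,\cdot] + \alpha\, \mat{U}[j',\cdot]$ always collapses to the single-entry assignment $\mat{U}[j,j'] \gets \alpha$, as the algorithm's comment asserts. The reason is that $\mat{U}[j',\cdot] = e_{j'}^{\top}$ at the moment of every such update: the row $\mat{U}[j',\cdot]$ is modified only when $j'$ itself plays the role of a pivot $\lft(r')$, but at iteration $i$ we have $\low(j') = i$ and the reduction will strictly decrease $\low(j')$, so the final value of $\low(j')$ is strictly smaller than $i$. Hence $j'$ can only be $\lft(r')$ for some $r' < i$, i.e., in a \emph{later} iteration, never an earlier one. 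Therefore $\mat{U}$ ends up equal to the identity plus exactly one entry $+\alpha$ at each position $(\lft(r), j')$ where a reduction occurred.

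Next, I would argue that \cref{alg:row-tree} computes the same coefficients $\alpha$ at the same positions once the running permutation $\mat{P}$ is undone. Both algorithms process rows from the bottom up; by the correctness theorem of \cref{alg:row-tree} already proved, it arrives at the same reduced matrix $\mat{R}$ with the same pivots, and by the time the base case for row $r = n-i+1$ is reached, all reductions originating from rows below have been applied. Consequently the ratio $\alpha = \mat{R}[r,j']/\mat{R}[r,\lft(r)]$ coincides with the one computed in iteration $i$ of \cref{alg:row-incremental}. The base case of \cref{alg:row-tree} performs the direct (non-accumulating) assignment $(\mat{P}\mat{\Lambda}\mat{P})[i,j] \gets -\alpha$, so unwinding $\mat{P}$ yields $\mat{\Lambda}[\lft(r), j'] = -\alpha$ at exactly the positions where $\mat{U}[\lft(r), j'] = +\alpha$, and $0$ elsewhere off the diagonal. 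Since both $\mat{\Lambda}$ and $\mat{U}$ have unit diagonal, $\mat{\Lambda} + \mat{U} = 2\mat{I}$, hence $\mat{U} = 2\mat{I} - \mat{\Lambda}$; combining with $\mat{V}\mat{U} = \mat{I}$ and $\mat{R} = \mat{D}\mat{V}$ one deduces $\mat{D} = \mat{R}\mat{U}$.

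The main obstacle is the bookkeeping around the permutation $\mat{P}$: one must carefully check that each upper-triangular position written to $\mat{\Lambda}$ in the permuted system corresponds, after unwinding, to the same $(\lft(r), j')$ that \cref{alg:row-incremental} writes to $\mat{U}$, and that no spurious entries are introduced at positions where $\mat{R}[r, j'] = 0$ would force $\alpha = 0$. Once this alignment is pinned down, the theorem reduces to the sign-flip identification between $\mat{\Lambda}$ and $\mat{U}$ together with the already-established invariant $\mat{R} = \mat{D}\mat{V}$ with $\mat{V}\mat{U} = \mat{I}$.
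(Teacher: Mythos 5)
Your proposal follows essentially the same route as the paper's proof: identify the off-diagonal entries of $\mat{\Lambda}$ as sign-flipped copies of the coefficients that the incremental row algorithm would write into $\mat{U}$, using the fact that each column being updated has not yet reduced any other column so that $\mat{U}[j',\cdot]$ is still a standard basis row at the moment of the update, and then observe that both matrices are unit upper-triangular so $\mat{U} = 2\mat{I} - \mat{\Lambda}$. Your write-up is a bit more explicit about why the single-entry assignment in \cref{alg:row-incremental} is valid, and you candidly flag the permutation bookkeeping as the remaining detail to pin down, but the key idea and structure match the paper.
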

\begin{proof}
    During a regular reduction, \cref{alg:row-incremental}, updates in matrix $\mat{U}$ use row operations to ``undo'' the column operations in matrices $\mat{R}$ and $\mat{V}$. When a column $j$ in $\mat{R}$ is subtracted from column $j'$, which itself has not been used to reduce any other column, its corresponding row $\mat{U}[j',\cdot]$ contains a single diagonal element.
     -- compare \cref{alg:standard,alg:lookahead}.
    In this case, the update in matrix $\mat{U}$ is equivalent to setting entry $\mat{U}[j',j]$ to the coefficient that is the negation of the coefficient used for the column update.
 When a column $j$ in $\mat{R}$ is subtracted from column $j'$, which itself has not been used to reduce any other column, its corresponding row $\mat{U}[j',\cdot]$ contains a single diagonal element.

    \cref{alg:row-tree} satisfies this property (which is why it produces a lazy reduction): when an update is recorded in matrix $\mat{\Lambda}$, the column that is being updated has not yet been used to reduce any other column. Therefore, off the diagonal,  the corresponding matrix $\mat{U}$ is the negation of matrix $\mat{\Lambda}$ --- compare the coefficients in  \cref{alg:row-tree} with $\alpha$ in  \cref{alg:row-incremental}. On the diagonal, both $\mat{\Lambda}$ and $\mat{U}$ are all ones.
    So adding $2 \cdot \mat{I}$ to $-\mat{\Lambda}$ recovers $\mat{U}$.
\end{proof}
\begin{remark}
As $\mat{U}$ is an upper triangular matrix, by \cref{thm:matrix_inv}, it can be inverted in $O(n^{\matexp})$-time as required. 
\end{remark}
\noindent\textbf{Running Time Analysis}
The base case takes $O(n)$ time as recording the permutation and zeroing out operations each take linear time. Hence the base case takes $O(n)$ time. 
For the general case, we need to first apply the column permutations. Since we are permuting at most an $\matsize{n}{k}$ (or $\matsize{k}{n}$) matrix, applying the permutation (and undoing it) takes $O(nk)$ time by Lemma \ref{lem:perm}. 
Next, applying the row updates from $B_r$ to $C_r$, assuming $|B|=|C| = k$, requires multiplying $(\mat{P} \cdot \mat{\Lambda} \cdot \mat{P})[B,\cdot]$, a $\matsize{k}{n}$ matrix, with $(\mat{R}\cdot\mat{P})[C_r,B]$, a $\matsize{k}{k}$ matrix --- taking
$O(nk^{\matexp-1})$ time by Lemma~\ref{lem:block_multiplication}. To complete the update, the subtractions take $O(nk)$ time. %The column updates also involve a column permutation of a $\matsize{n}{2k}$ matrix -- namely the columns of $C$ before and after transposition. 
%This takes $O(nk)$ time. 
To compute the column updates, 
we must multiply $(\mat{R}\cdot\mat{P})[\bar{A}_r,B]$, an $\matsize{n}{k}$ matrix with  $(\mat{P} \cdot \mat{\Lambda} \cdot \mat{P})[B,C]$, a $\matsize{k}{k}$ matrix, which again takes $O(nk^{\matexp-1})$ time. 
Putting these together, we find that the full step can be completed in  $O(nk^{\matexp-1})$ time and as in the column algorithm case, solving the recursion  gives an  $O(n^{\matexp})$ running time for $\matexp>2$ and  a  $O(n^2\log n )$ running time for $\matexp=2$. %Finally, applying the reverse permutation take $O(n^2)$ time so it does not affect the running time. 

% Variations on \cref{lin:zero-out}:
% \begin{itemize}
%     \item
%         (\emph{Lazy}) As written in \cref{alg:row-tree}, we zero out every entry in $(M \cdot P)[(n-i),i \ldots]$. All those entries are pivots (see \cref{rem:pivots}), and therefore we get the standard (lazy) reduction.
%     \item
%         (\emph{Exhaustive})
%         \Remark{It's not clear that this works. We need to apply column updates
%                 from right to left, which requires modifying already processed blocks.}
%         We can zero out all entries in row $(M \cdot P)[(n-i), \cdot]$ that are to the right of the current pivot in the original order (before $P$ is applied). This will result in an exhaustive reduction.
%     \item
%         (\emph{Repair}) If $M = V$ (from $R=DV$), then we can zero out all entries $j$ in row $(M \cdot P)[(n-i), \cdot]$ (equivalently, $(M \cdot P)[(n-i), \ldots i]$) such that the pair $(p(j), j) \not\subseteq (p(i), i)$. This will result in the repair.
% \end{itemize}

\section{Discussion}
In this paper, we have shown that in the  case of persistent (co)homology, the standard bases returned by the exhaustive and lazy reductions can be computed in matrix multiplication time. It is worth noting that while both the columns and row algorithms utilize batch updates, they are qualitatively different.
\begin{enumerate}
\item Only the column algorithm critically relies  on fast  matrix inversion.%, while the row algorithm does not.
\item The row algorithm update sequence is substantially more complex %than in the column case 
but only requires column permutations. 
\end{enumerate}
%
%Interestingly, each algorithm also seems to most naturally correspond to a basis (column $\leftrightarrow$ exhaustive and row $\leftrightarrow$ lazy). 
While one can use the column algorithm to compute the lazy basis and the row algorithm for the exhaustive basis in the same asymptotic running time, the resulting algorithms are both complex and not particularly illuminating. There are also numerous technical complications to deal with, so we omit them from this paper. Another outstanding issue is whether it is possible to transform the exhaustive representatives to lazy representatives and vice versa without redoing the reduction. Again we believe this possible, but omit it due to space and because the asymptotic running time is same. Finally an open question is regarding representatives in zigzag persistence which has  recently been addressed in \cite{dey2021updating,dey2024fast}.

%%% DO WE WANT THE OPEN QUESTIONS?
% Finally, we conclude with the open questions:
% \begin{enumerate}
% \item What is the optimal running time for computing representatives in zig-zag and multi-parameter settings? (The zig-zag case was asked in \cite{})
% \item Are there classes of vineyards which can take advantage of these methods for faster computation?
% \end{enumerate}

% \section{Repair Algorithm}
% \section{Cohomology}

\section*{Acknowledgement}
Dmitriy Morozov was supported by the Director, Office of Science, Office of Advanced Scientific
 Computing Research, of the U.S.\ Department of Energy under Contract No.\ DE-AC02-05CH11231.
Primoz Skraba was supported by the EPSRC AI Hub on Mathematical Foundations of Intelligence: An ``Erlangen Programme" for AI No. EP/Y028872/1.

\bibliographystyle{unsrturl}
\bibliography{bibliography}

\appendix

\section{Matrix Inversion in Matrix Multiplication Time}\label{appendix:fastinversion}

We present a simplified proof for the special case of triangular matrices. This can be shown with greater generality and was stated as Fact 4 in Strassen's original
paper on sub-cubic matrix multiplication time \cite{strassen1969gaussian}. While Strassen's result required certain submatrices to be non-singular, these constraints where removed by Bunch and Hopcroft \cite{bunch1974triangular}. We include the proof for completeness.  Importantly, this makes no assumption on the underlying field. 
We provide the proof for lower triangular matrices, but it holds equivalently for upper triangular, by transposition. 
 We begin with the following observation.
 \begin{observation}
 A  lower triangular matrix is non-singular if and only if the diagonal elements are non-zero.
 \end{observation}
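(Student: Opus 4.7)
The plan is to prove the two directions of the equivalence separately, both by elementary arguments. For the ``if'' direction I would assume every diagonal entry of $L$ is non-zero and show the kernel is trivial by forward substitution: the first equation of $Lx = 0$ reads $L_{11} x_1 = 0$, which forces $x_1 = 0$ since $L_{11} \neq 0$. Proceeding by induction on $k$: once $x_1 = \cdots = x_{k-1} = 0$, the $k$-th equation collapses to $L_{kk} x_k = 0$ (all off-diagonal entries $L_{kj}$ with $j<k$ multiply zero), and the assumption $L_{kk} \neq 0$ forces $x_k = 0$. Hence $Lx = 0$ implies $x = 0$, so $L$ is non-singular.

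For the ``only if'' direction I would prove the contrapositive: if some diagonal entry vanishes, then $L$ is singular. The cleanest route is via the determinant. By induction on $n$, cofactor expansion along the first row of a lower triangular matrix (whose only non-zero first-row entry is $L_{11}$) gives $\det L = L_{11} \cdot \det L[2..n,\, 2..n]$, and unwinding the recursion yields $\det L = \prod_{i=1}^n L_{ii}$. If any $L_{ii} = 0$, this product vanishes and $L$ is singular.

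Neither step presents a real obstacle; both arguments are standard and field-independent, matching the paper's emphasis that the inversion result makes no assumption on the underlying field. The only minor care is in the determinant argument, where one should state the cofactor expansion cleanly (either along the first row, as above, or by noting that $L$ is in row echelon form with the diagonal as pivots up to scaling), so that no division or characteristic-dependent manipulation sneaks in.
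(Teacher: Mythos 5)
Your proof is correct. The paper dispatches this observation with a single sentence --- ``This follows from the fact that the columns must be linearly independent and so the matrix is full rank and hence invertible'' --- which really only gestures at the ``if'' direction. Your forward-substitution argument on $Lx=0$ is the same linear-independence idea the paper invokes, made explicit (a vanishing linear combination of the columns is exactly a solution of $Lx=0$), while your determinant computation $\det L = \prod_i L_{ii}$ cleanly supplies the ``only if'' direction that the paper leaves implicit. Both of your steps are elementary, avoid any division except by the already-assumed-nonzero diagonal entries, and are field-independent, which matters for the paper's later use over arbitrary finite fields; so your version is more careful than the original and buys a genuinely complete two-sided proof at no extra cost.
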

 This follows from the fact that the columns must be linearly independent and so the matrix is full rank and hence invertible. 

 \thmmatrixinv*
 %\begin{theorem}%\label{thm:matrix_inv}
 %    Inverting a non-singular lower triangular matrix  over any field can be done in matrix multiplication time.
 %\end{theorem}
 \begin{proof}
  Consider a non-singular lower triangular  $\matsize{n}{n}$ matrix $\mat{A}$.  Further assume that $n$ is a power of 2. If it is not we can consider the matrix 
 \[\begin{bmatrix}\mat{A} & \mat{0}_{\matsize{n}{m}}\\ \mat{0}_{\matsize{m}{n}} & \mat{I}_{m}\end{bmatrix}\]
 where $m=2^{\lceil \log_2 n\rceil} - n$. 
From now on assuming $n$ is a power of 2, we can write $\mat{A}$ as
 \[
     \begin{bmatrix}
         \mat{B}&\mat{0} \\
         \mat{C} &\mat{D}
     \end{bmatrix}\]
 where all sub-matrices are of size $\frac{n}{2}$.
 Observe that $\mat{B}$ and $\mat{D}$ lower diagonal with a non-zero diagonal elements  -- the same as $\mat{A}$ -- and so are non-singular. Through direct computation, we note that the inverse is 
 \[\mat{A}^{-1} = 
     \begin{bmatrix}
         \mat{B}^{-1}&\mat{0} \\
         -\mat{D}^{-1}\mat{C}\mat{B}^{-1} &\mat{D}^{-1}  
     \end{bmatrix}\]
 That this is the inverse can be directly verified by expanding $\mat{A}^{-1} \cdot \mat{A}$ or vice versa. Therefore, we can compute the inverse by computing the inverse of $\mat{B}$ and $\mat{D}$.
 Therefore, we require the inversion of two $\matsize{\frac{n}{2}}{\frac{n}{2}}$ matrices, and two matrix multiplications. 
 As $\mat{B}$ and $\mat{D}$ are both lower triangular with a non-zero diagonal, we can further subdivide and compute the inverse as above but with the initial matrix being of size $\matsize{\frac{n}{2}}{\frac{n}{2}}$.

To bound the total computational complexity, we let $I(n)$ be the time to invert an $\matsize{n}{n}$ (lower triangular) matrix and $M(n)$ the time to perform matrix multiplication of two $\matsize{n}{n}$ matrices. This gives the recurrence
 \[I(n) \leq 2 I\left(\frac{n}{2}\right) + 2 M\left(\frac{n}{2}\right).\]
Let  $M(n) \leq c_1 n^{\matexp}$ with $M(1)=c_1$ and $I(1)=c_2$. Assuming $n = 2^{k}$, we can write 
\begin{align*}
 I(2^k) &\leq  2I(2^{k-1}) +2M(2^{k-1})\\
 &\leq  2^2 (I(2^{k-2})+M(2^{k-2}) +2M(2^{k-1})\\
 &\qquad\qquad\qquad\vdots\\
 &\leq  2^k I(1) + \sum\limits_{j=1}^{k}2^jM(2^{k-j})\\
 &\leq  c_2 2^k  + c_1 \sum\limits_{j=1}^{k-1}2^j 2^{(k-j)\matexp}\\
 &=  c_2 2^k  + c_1 2^{k\matexp} \sum\limits_{j=1}^{k-1}2^{-j(\matexp-1)}
 \end{align*}
 We observe that since $\matexp \geq 2$,
 \begin{align*}
 \sum\limits_{j=1}^{k-1}2^{-j(\matexp-1)} \leq  \sum\limits_{j=0}^{\infty}2^{-j(\matexp-1)}  \leq 2
 \end{align*}
 Substituting in and  using that $n=2^k$, we get
 \begin{align*}
 I(2^k) &\leq  c_2 2^k  + c_1 2^{k\matexp} \sum\limits_{j=1}^{k-1}2^{-j(\matexp-1)}\\
 &\leq c_2 2^k  + 2 c_1 (2^k)^\matexp = O(n+ n^\matexp) = O(n^\matexp).
 \end{align*}
 
\end{proof}

\section{Analysis of Persistence Recursion}
\label{appendix:recursion-analysis}

For all the algorithms, the recursion is of the form
\[ \PH(\ell) \leq  2\PH\left(\frac{\ell}{2}\right) +c_1 n \left(\frac{\ell}{2}\right)^{\matexp-1} \]
where $\PH(n)$ is the running time for persistent (co)homology on a filtration with $n$ cells and
 $c_1$ takes into account the constant in terms of number of applications as well as for the matrix multiplication. 
Assuming $n$ is a power of 2, for $\ell = 2^k$,
\begin{align*}
 \PH(2^k) &\leq  2\PH(2^{k-1}) +c_1 n 2^{(k-1)(\matexp-1)} \\
 &\leq  2^2 \PH(2^{k-2})+2 c_1 n 2^{(k-2)(\matexp-1)} +c_1 n 2^{(k-1)(\matexp-1)}\\
 &\qquad\qquad\qquad\vdots\\
 &\leq  2^k \PH(1) + c_1 n \sum\limits_{j=0}^{k-1}2^{j}2^{(k-1-j)(\matexp-1)}\\
 & =  2^k \PH(1) + c_1 n 2^{(k-1)(\matexp-1)} \sum\limits_{j=0}^{k-1}2^{j-j(\matexp-1)}\\
 & =  2^k \PH(1) + c_1 n 2^{(k-1)(\matexp-1)} \sum\limits_{j=0}^{k-1}2^{-j(\matexp-2)}
 \end{align*}
% Note that the last two equalities follow from
% \begin{align*}
% (k-1-j)(\matexp-1) = (k-1)(\matexp-1)  - j(\matexp-1) 
% \end{align*}
% and 
% \begin{align*}
% j - j(\matexp-1) = -j(\matexp-2)
% \end{align*}
Assuming $\matexp>2$, we note that 
\begin{align*}
\sum\limits_{j=0}^{k-1}2^{-j(\matexp-2)} \leq \sum\limits_{j=0}^{\infty}2^{-j(\matexp-2)} 
= \frac{1}{1-2^{-(\matexp-2)}} \leq c_2
 \end{align*}
 Observe that $c_2$ depends on $\matexp$.
 Substituting in, we get
 \begin{align*}
 \PH(2^k) &\leq  2^k \PH(1) + c_1 n 2^{(k-1)(\matexp-1)} \sum\limits_{j=0}^{k-1}2^{-j(\matexp-2)} \\
 &\leq 2^k \PH(1) + c_1 c_2 n 2^{(k-1)(\matexp-1)}
 \end{align*}
If $\ell = n = 2^k$, then
\begin{align*}
\PH(n) \leq n \PH(1) +\frac{c_1 c_2}{2} n^{\matexp} 
\end{align*}
Since $\PH(1)\leq  c_3 n$, we get
\[\PH(n) \leq O(n^2 + n^{\matexp}) = O(n^{\matexp})\]
as required. 

If $\matexp=2$, then 
\begin{align*}
\sum\limits_{j=0}^{k-1}2^{-j(\matexp-2)} =  \sum\limits_{j=0}^{k-1} 1 
 =   k  \leq c_4 \log n
 \end{align*}
 Substituting in, we get 
 \begin{align*}
 \PH(2^k) &\leq  2^k \PH(1) + c_1 n 2^{(k-1)} \sum\limits_{j=0}^{k-1}2^{-j(\matexp-2)} \\
 &\leq 2^k \PH(1) + c_1 c_2 c_4 n 2^{(k-1)} \log n\\
 &=  2^k \PH(1) + \frac{c_1 c_2 c_4}{2} n 2^{k} \log n\\
 & \leq O(n^{2}+n^2 \log n ) =  O(n^2 \log n) 
 \end{align*}

\end{document}